 \theoremstyle{plain}
\newtheorem{theo}{Theorem}[section]
  \theoremstyle{remark}
  \newtheorem{rema}[theo]{Remark}
  \theoremstyle{plain}
  \newtheorem{corr}[theo]{Corollary}
  \theoremstyle{plain}
  \newtheorem{prop}[theo]{Proposition}
  \theoremstyle{plain}
  \newtheorem{lemm}[theo]{Lemma}
  \theoremstyle{definition}
  \newtheorem{defi}[theo]{Definition}
  \theoremstyle{definition}
  \newtheorem{exam}[theo]{Example}%
\def\ot{\otimes}
\DeclareMathOperator{\ord}{ord}
\DeclareMathOperator{\Supp}{Supp}
\DeclareSymbolFont{cyrletters}{OT2}{wncyr}{m}{n}
\DeclareMathSymbol{\Sha}{\mathalpha}{cyrletters}{"58}
\def\wt{\widetilde}
\DeclareMathOperator{\maxi}{max}
\DeclareMathOperator{\fini}{finite\ set}
\DeclareMathOperator{\primes}{prime}
\DeclareMathOperator{\degree}{deg}
\newfont{\msbm}{msbm10}
\newfont{\msbms}{msbm6}
\def\Q{{\mathbb Q}}
\def\Z{{\mathbb Z}}
\def\al{\alpha}
\def\ft{\wt f}
\def\Ft{\wt F}
\def\P{{\mathbb P}}
\def\G{{\mathcal G}}
\def\Ff{{\mathcal F}}
\def\ff{{h}}
\def\U{{\mathcal U}}
\def\k{{\Bbbk}}
\def\F{{\mathbb F}}
\def\I{{\mathcal I}}
\DeclareMathOperator{\Sel}{Sel}
\DeclareMathOperator{\Image}{Image}
\DeclareMathOperator{\Kernel}{Kernel}
\DeclareMathOperator{\Gal}{Gal}
\DeclareMathOperator{\Maps}{Map}
\DeclareMathOperator{\Genus}{Genus}
\def\sel{{\Sel^{(\mu)}(C,\Q)}}
\def\th{{\theta}}
\def\Th{{\Theta}}
\def\p{{\mathfrak p}}
\def\bi{\begin{itemize}}
\def\ol{\overline}
\def\ei{\end{itemize}}
\def\la{\label}
\def\O{{\mathcal O}}
\def\de{\delta}
\def\x{\times}
\newcommand{\be}{\begin{equation}}
\newcommand{\ee}{\end{equation}}
\newcommand{\ben}[1]{\begin{enumerate}[(#1)]}
\newcommand{\een}{\end{enumerate}}
\newcommand{\ba}{\begin{array}}
\newcommand{\ea}{\end{array}}
\newcommand{\bea}{\begin{eqnarray*}}
\newcommand{\eea}{\end{eqnarray*}}
\numberwithin{equation}{section}
\begin{document}

\title{Descent on superelliptic curves}

\thanks{The author is supported by the research grant FCT SFRH/BD/44011/2008.}
\subjclass[2010]{Primary 11G30 ; Secondary 11G20, 11D41, 11D45}
\author{Michael Mourao}

\email{M.Mourao@warwick.ac.uk}

\address{Mathematics Institute, University of Warwick}

\begin{abstract} 
We are concerned with the question of determining the set $C(\Q)$, where $C$ is a curve defined by an equation of the form $y^q=f(x)$, where $q$ is an odd prime and $f$ is a polynomial defined over $\Q$. This question can often be answered using a set which encapsulates information about local solubility of a particular collection of covers of $C$. We define this set and show how to compute it.
\end{abstract}
\maketitle

\section{Introduction}

When  working with the set of rational points $C(\Q)$ of an algebraic curve $C$, we often encounter examples where the  Hasse principle fails to decide whether this set is empty or not. The method of two-cover descent on hyperelliptic curves is based on the fact that, for a hyperelliptic curve $C$, there is a computable collection of covers $\phi_\al:D_\al\rightarrow C$, such that\be\la{initial}C(\Q)=\bigcup_{\al\in\fini}\phi_\al(D_\al(\Q)).\ee Therefore, when local-to-global arguments cannot be applied directly to $C$, the problem can be transferred to the one of looking for rational points on the covers. This is described explicitly in \cite{MR2521292}. In the present paper we extend this method to superelliptic curves $C$ defined by an equation of the form $y^q=f(x)$, where $q$ is an odd prime and $f\in\Q[x]$ is $q$-th power-free. The theory behind the process of performing descent on the Jacobian variety $J_C$ of such curves, defined over a field containing the relevant roots of unity, is studied in detail in \cite{MR1465369}. In \cite{brendan1}, an extension of the descent map to the Picard group of curves of this type is used to introduce new insights on the nature of $\Sha(J_C/\Q)$. Nevertheless, it is often much simpler and faster to avoid working with the rational points of the Jacobian and instead restrict to information obtained using only the initial curve. The Selmer set we define contains information which is sometimes sufficient to determine the set $C(\Q)$. We explain the necessary theory and present an explicit algorithm, similar to the one in \cite{MR2521292}, to compute this Selmer set. Note that we can not use a trivial extension of the existing routines because our algorithm is expected to deal with the possibility of singular points, which do not appear in the case of hyperelliptic curves.

The problem of finding points on superelliptic curves generalizes the problem of finding solutions to Thue equations (see \cite{MR1892843}) and can also be used to solve generalized Fermat equations. In Example \ref{exa3} we solve four such equations considered in \cite{MR1915212}, which were used by the authors as examples of the limitations of their approach. Thus in many situations descent arguments are more appropriate than other techniques.  As illustrated by Examples \ref{exa2} and \ref{exa3} it is often the case that $C$ is everywhere locally soluble, but its associated covers fail to be so, preventing $C$ from having any rational points, since the union in \eqref{initial} is comprised of empty sets. In Example \ref{exa4}, local information together with information obtained using subcovers, following a method proposed in \cite{MR2011330} and \cite{MR2156713}, is used to prove that the curve\[y^3=(x^2-3)(x^4-2)\]
\noindent has no rational points except from one rational point at $\infty$. By making descent applicable to singular superelliptic curves, we were able to prove in Theorem \ref{theo1} that the only pair $(a,b)\in\Z_{>0}^2$ satisfying
\[
b^3=\sum_{i=1}^ai^9
\]
\noindent is $(1,1)$.

\section{Descent on superelliptic curves}\la{se1}
\subsection{Preliminaries}\la{ss11}

\begin{defi}
Let $q$ be an odd prime and $n$ be a positive integer. We define a \textbf{superelliptic curve} to be a plane curve $C$ defined as the locus of the equation\[y^q=f(x)=a_nx^n+\ldots+a_1x+a_0,\]\noindent for some polynomial $f$ with coefficients in $\Q$.
\end{defi}

\begin{rema}
When $q\nmid n$ we think of $C$ as having a single point at infinity, otherwise $C$ has $q$ distinct points at infinity.
\end{rema}

\begin{prop}\label{changeofvars}
Every superelliptic curve is birational to a superelliptic curve satisfying an equation of the form $y^q=f(x)$, where $\text{deg}(f)=n$ and $q\mid n$.
\end{prop}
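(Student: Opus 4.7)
If $q \mid n$ there is nothing to prove, so assume $q \nmid n$. The plan is to exhibit an explicit birational map over $\Q$ between $C:y^q=f(x)$ and a curve of the desired form. The classical trick is an inversion $x\mapsto 1/x'$ together with a compensating change $y\mapsto y'/x'^{k}$, but this only produces a polynomial of the right degree on the right-hand side when the constant term $f(0)$ is nonzero, so a preliminary shift is needed.

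First I would pick $\alpha\in\Q$ with $f(\alpha)\neq 0$; such an $\alpha$ exists because $f$ has only finitely many roots in $\overline{\Q}$. Setting $k=\lceil n/q\rceil$, so that $kq\geq n$ and $q\mid kq$, I would perform the substitution
\[
x=\alpha+\frac{1}{x'},\qquad y=\frac{y'}{x'^{k}}.
\]
This is a birational morphism of affine varieties over $\Q$: the inverse map is $x'=1/(x-\alpha)$, $y'=y/(x-\alpha)^{k}$, and both directions are given by rational functions defined on dense opens (avoiding $x'=0$ and $x=\alpha$ respectively).

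Substituting into $y^q=f(x)$ and clearing denominators by multiplying through by $x'^{kq}$ yields $y'^{q}=g(x')$, where
\[
g(x'):=x'^{kq}\,f\!\left(\alpha+\tfrac{1}{x'}\right).
\]
I would then verify that $g\in\Q[x']$ is a polynomial of degree exactly $kq$. Expanding $f(\alpha+1/x')=\sum_{i=0}^{n}a_i(\alpha+1/x')^i$ as a Laurent polynomial in $x'$, the lowest occurring power of $x'$ is $x'^{-n}$ with coefficient $a_n\neq 0$, while the coefficient of $x'^{0}$ equals $f(\alpha)\neq 0$. Multiplication by $x'^{kq}$ therefore turns this into a genuine polynomial whose leading term is $f(\alpha)\,x'^{kq}$, so $\deg g=kq$, and by construction $q\mid kq$.

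The only non-mechanical input is the existence of a rational $\alpha$ with $f(\alpha)\neq 0$; this is the ingredient that forces one to introduce a translation before inverting, and it is the step I expect to be the main (though easy) obstacle. Everything else is a direct calculation, and the resulting curve $y'^{q}=g(x')$ is a superelliptic curve birational to $C$ and of the required form.
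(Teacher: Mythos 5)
Your proof is correct and is essentially the same argument as the paper's: translate by a rational $\alpha$ with $f(\alpha)\neq 0$ so the constant term is nonvanishing, then apply the inversion $x\mapsto 1/x'$ with the compensating $y\mapsto y'/x'^{k}$ where $k=\lceil n/q\rceil$. The paper writes $n=iq+j$ and uses $i+1$ in place of your $k$, but the birational map and the resulting polynomial are identical; you are just slightly more explicit about why the transformed right-hand side is a polynomial of exact degree $kq$.
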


\begin{proof}
Suppose $C': (y')^q=f'(x')$ is a superelliptic curve and that $\text{deg}(f')=m=iq+j$ with $i\in\Z_{\geq 0}$ and $0\leq j <q$. Pick $\al\in\Q$ such that $f'(\al)\neq 0$. Define the polynomial $h$ by $f'(x')=h(x'-\al)$ and the polynomial $f$ of degree $n=q(i+1)$ by $f(x)=h(1/x)x^{q(i+1)}$. Let $C$ be the superelliptic curve defined by $y^q=f(x)$. Then $C'$ is birational to $C$ via the map $(x',y')\mapsto \left(\frac{1}{x'-\al},\frac {y'}{(x'-\al)^{i+1}}\right)$.
\end{proof}

\noindent In light of this proposition, we will assume that $a_n\neq 0$ and that $q\mid n$ throughout the rest of the paper. When this holds, we can also think of $C$ as being the locus of the equation\begin{equation}\la{curve}Y^q=F(X,Z)=a_nX^n+\ldots+a_1XZ^{n-1}+a_0Z^n\end{equation}in the weighted projective plane $\P^2(1,\frac{n}{q},1)$, where the variables $X$ and $Z$ have weight $1$ and $Y$ has weight $n/q$. We will be using the affine and projective descriptions of $C$ interchangeably. Without loss of generality, we may assume that $f$ is a $q$-th power-free polynomial with integer coefficients.

From now on let $\k$ denote one of $\Q$, $\bar\Q$, $\Q_p$, $\bar\Q_p$ (where $p$ can be any rational prime). Define $\Ff_\k$ to be the set of monic, irreducible over $\k$, polynomials such that 
\[
f=a_n\prod_{\ff\in\Ff_\k}\ff^{n_\ff},
\]
where $1\leq n_\ff\leq q-1$ for all $\ff\in\Ff_\k$. Denote the degree of $\ff$ by $d_\ff$. Let $A_\k$ be the semi-simple $\k$-algebra $\k[t]/(g(t))$, where 
\[
g=\prod_{\ff\in\Ff_\k}\ff.
\]
Note that $g$ is defined over $\Q$ and does not depend on $\k$. We denote its degree by $d$. $A_\k$ decomposes as a direct product of finite field extensions of $\k$ 
\[
A_\k=\prod_{\ff\in\Ff_\k}K_\ff=\prod_{\ff\in\Ff_\k}\k[t]/(\ff).
\]
Denote by $\th_\ff\in K_\ff$ the image of the generator $t$ under the quotient map and by $\Th_\k$ the set
\[
\Th_\k=\{\th_\ff\quad :\quad \ff\in\Ff_\k\}. 
\]

For the following definition we will assume that a point $(X,Y,Z)\in C(\k)$ is normalized such that if $\k=\Q$ then $X,Y,Z\in\Z$ with $\gcd(X,Z)=1$ and if $\k=\Q_p$ then $X,Y,Z\in\Z_p$ with either $Z=1$ or $X=1$, $Z\in p\Z_p$. We can always find such representations by scaling the points. 

\begin{defi}
For $\k=\Q$ and $\k=\Q_p$ define the component maps $\delta_{\ff}:C(\k)\rightarrow K_{\ff}^*/K_{\ff}^{*q}$
\[
\de_{\ff}(X,Y,Z)=\left\{\ba{ll} (X-\th_{\ff}Z)K_{\ff}^{*q} & \text{if } X-\th_{\ff}Z\neq 0, \\ \sqrt[n_{\ff}]{\Ft_{\ff}(X,Z)^{-1}K_{\ff}^{*q}} & \text{otherwise},\ea\right. \]where $\Ft_{\ff}$ is the two-variable polynomial with coefficients in $K_{\ff}$ defined by $(X-\th_{\ff}Z)^{n_{\ff}}\Ft_{\ff}(X,Z)=F(X,Z)$ and $\sqrt[n_{\ff}]{\Ft_{\ff}(X,Z)^{-1}K_{\ff}^{*q}}$ is defined to be the unique element $v\in K_{\ff}^*/K_{\ff}^{*q}$ such that $v^{n_{\ff}}=\Ft_{\ff}(X,Z)^{-1}$. 
\end{defi}

\begin{rema}
\begin{enumerate}[(1)]
\item Note that $v$ exists because the groups $K_{h}^*/K_{h}^{*q}$ have exponent $q$ and $\gcd(q,n_{h}) = 1$ and it is unique since these groups are $\F_q$-vector spaces. 
\item The component maps $\de_{h}$ are defined this way because for a point $(X,Y,Z)\in C(\k)$ with $F(X,Z)\neq 0$, we have that $(X-\th_{h}Z)K_{h}^{*q}\equiv\sqrt[n_{h}]{\Ft_{h}(X,Z)^{-1}K_{h}^{*q}}$, since $(X-\th_{h}Z)^{n_{h}}\Ft_{h}(X,Z)=F(X,Z)=Y^q$.
\end{enumerate}
\end{rema}

\begin{defi}
Let $\de_\k :C(\k)\rightarrow A_\k^*/A_\k^{*q}$ be 
\[
\de_\k=\left(\de_h\right)_{h\in\Ff_\k}.
\]
\end{defi}

\noindent In order to account for the fact that for any $\lambda\in \k^*$ and $(X,Y,Z)\in C(\k)$, $(X,Y,Z)=(\lambda X,\lambda^{n/q}Y,\lambda Z)\in C(\k)$, we quotient the codomain of $\de_\k$ by this action of scalars. So we define an action of $\k^*$ on $A_\k^*$ by
\[
\lambda \left(\al_h\right)_{h\in\Ff_\k} = \left(\lambda\al_h\right)_{h\in\Ff_\k},
\]
where $\lambda\in\k^*$ and  $\left(\al_h\right)_{h\in\Ff_\k}\in A_\k^*$. This action descends to an action of $\k^*/\k^{*q}$ on $A_\k^*/A_\k^{*q}$. We denote by $A_\k^*/\k^*A_\k^{*q}$ the quotient of $A_\k^*/A_\k^{*q}$ by this action.

\begin{defi}
Define the \textbf{\textit{descent map}} $\mu_\k$ to be the composition
\[
\begin{CD}C(\k) @>{\de_\k}>> A_\k^*/A_\k^{*q} @>{\pi_\k}>> A_\k^*/\k^*A_\k^{*q}\end{CD}
\]
where $\pi_\k$ is just the projection to the quotient.
\end{defi}

\subsection{The image of $\de_\Q$}\la{ss12}

The image of $\de_\Q$ is contained in a finite subgroup $A(q,\mathbf{S})$ of $A_\k^*/A_\k^{*q}$. To see this let us restrict our attention to finding the allowed possibilities for each of the $\#\Ff_\Q$ components. Let $h\in\Ff_\Q$ and set $\ft_{h} (x) = f(x)/(x-\th_{h})^{n_h} \in K_{h}[x]$.

Suppose $(X,Y,Z)\in C(\Q)$ with $X,Y,Z\in\Z$ and $X$ coprime with $Z$. We have that
\[
\de_{h}(X,Y,Z)=(X-\th_{h} Z)K_{h}^{*q}.
\]
Now suppose that $\p\nmid a_n\O_{K_{h}}$ is a prime ideal of the ring of integers $\O_{K_h}$ of the number field $K_h$. By assumption we have that
\[
\ord_\p(X-\th_{h} Z)\geq 0,\qquad\ord_\p(\Ft_{h}(X,Z))\geq 0.
\]
At this point we want to figure out which primes $\p$ appear in the factorization of $(X-\th_{h}Z)$, but not as a $q$-th power. So suppose $q\nmid \ord_\p(X-\th_{h} Z)$. Since $\gcd(n_{h},q)=1$, this implies that $q\nmid \ord_\p(\Ft_{h} (X,Z) )$. In particular we have $X\equiv \th_{h} Z$ and $\Ft_{h} (X,Z)\equiv 0$ modulo $\p$, which together give that $\Ft_{h} (\th_{h} Z,Z)=Z^{n-n_h}\ft_{h} (\th_{h})\equiv 0$ modulo $\p$. But if $Z\equiv 0$ modulo $\p$ then also $X\equiv 0$ modulo $\p$ which contradicts coprimality, so we have that $\p\in \Supp(\ft_{h} (\th_{h})\O_{K_{h}})$. By dropping the initial condition on $\p$ we have that if $q\nmid \ord_\p(X-\th_{h} Z)$ then $\p \in \Supp(a_n\ft_{h} (\th_{h})\O_{K_{h}})\subseteq \Supp(\Delta \O_{K_{h}})$, where $\Delta=a_n\text{Disc}(g)$. In other words \begin{equation*} (X-\th_{h} Z)\O_{K_{h}}=\p_1^{e_1}\ldots\p_l^{e_l}\I^q\end{equation*}where $\{\p_1,\ldots,\p_l\}=\Supp\left(a_n \ft_{h}(\th_{h})\O_{K_{h}}\right)$, $(e_1,\ldots,e_l)\in \F_q^l$ and $\I$ is a fractional ideal of $K_{h}$. 

Now define the sets of primes $S_h=\Supp(a_n\ft_{h}(\th_{h})\O_{K_{h}})$ for $h\in\Ff_\Q$. Then by the discussion above $\Image(\de_{h})\subseteq K_{h}(q,S_h)$ where
\[
K_{h}(q,S_h):=\{\al K_{h}^{*q} : q\mid\ord_\p(\al) \text{ for all } \p\notin S_h\}.
\]
Note that $K_{h}(q,S_h)$ is a finite subgroup of $K_{h}^*/K_{h}^{*q}$ (a proof of this can be found within the proof of Proposition VIII 1.6. in \cite{MR2514094}). Therefore we have that
\begin{equation}\label{AS}
\Image(\de_\Q)\subseteq\prod_{h\in\Ff_\Q}K_{h}(q,S_h)=:A(q,\mathbf{S})
\end{equation}
which is a finite subgroup of $A_\Q^*/A_\Q^{*q}$. 

\subsection{The image of $\de_\k$}\la{ss13}

\begin{defi}
Define the weighted norm homomorphism $N_{A/\k}:A_\k^*\rightarrow\k^*$ as
\[
N_{A/\k}\left((\al_h)_{h\in\Ff_\k}\right) = \prod_{h\in\Ff_\k}N_{K_{h}/\k}(\al_h)^{n_{h}}.
\]
Since $N_{A/\k}(A_\k^{*q})$ is a subgroup of $\k^{*q}$ we also get a homomorphism $\bar N_{A/\k}:A_\k^*/A_\k^{*q} \rightarrow \k^*/\k^{*q}$.
\end{defi}

\noindent By commutativity of the following diagram of norm homomorphisms
\[
\begin{CD}A_\k^* @>{N_{A/\k}}>> \k^* \\ @VVV  @VVV \\ A_\k^*/A_\k^{*q} @>{\bar N_{A/\k}}>> \k^*/\k^{*q}\end{CD}
\]
and the fact that 
\[
N_{A/\k}\left(\de_\k(X,Y,Z)\right)=\prod_{h\in\Ff_\k}N_{K_{h}/\k}(X-\th_{h} Z)^{n_{h}}=Z^n\x\prod_{h\in\Ff_\k} h(X/Z)^{n_{h}}=\frac{Y^q}{a_n}
\]
we can deduce that 
\[
\Image(\de_\k)\subseteq \bar N_{A/\k}^{-1}\left(\frac{1}{a_n}\k^{*q}\right)=:H_\k
\]
which, if non-empty, is a coset of the subgroup $\Kernel(\bar N_{A/\k})$ in $A_\k^*/A_\k^{*q}$. Combining this with the inclusion \eqref{AS} we deduce that 
\[
\Image(\de_\Q)\subseteq H_\Q\cap A(q,\mathit{\mathbf{S}})=:H_\Q(\mathit{\mathbf{S}}). 
\]

\subsection{The image of $\mu_\Q$}\label{ss14}

\begin{lemm}\la{lemm11}
Let $A_\Q$ be the semi-simple $\Q$-algebra associated to the curve $C$ and $A(q,\mathit{\mathbf{S}})$ be the subgroup of $A_\Q^*/A_\Q^{*q}$ defined in \eqref{AS}. Also set 
\[
T:=\{p\ \text{prime} :  \text{for all } h\in\Ff_\Q,\text{ and all }\p\in \Supp(p\O_{K_{h}}), \left(q\mid \ord_\p(p\O_{K_{h}})\right)\ \text{or}\ (\p\in S_h) \}.
\]
\[
\begin{diagram}\Q^*/\Q^{*q} & \rTo^\iota & A_\Q^*/A_\Q^{*q} & \rTo^{\pi_\Q} & A_\Q^*/\Q^*A_\Q^{*q} & \rTo & 1\\ \uInto & & \uInto & \ruTo & & & \\ \Q(q,T) & & A(q,\mathit{\mathbf{S}}) & & & & \end{diagram}
\]
Then 
\ben{i}
\item $\Image(\iota)\cap A(q,	\mathit{\mathbf{S}})=\iota\left(\Q(q,T)\right)$
\item $\pi_\Q\left(A(q,\mathit{\mathbf{S}})\right)\cong A(q,\mathit{\mathbf{S}})/\iota\left(\Q(q,T)\right).$
\een
\end{lemm}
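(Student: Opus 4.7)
My plan is to derive (ii) as a quick corollary of (i). Since $\pi_\Q$ is by construction the projection of $A_\Q^*/A_\Q^{*q}$ onto the quotient by the diagonal action of $\Q^*/\Q^{*q}$, its kernel is exactly the subgroup $\Q^* A_\Q^{*q}/A_\Q^{*q} = \Image(\iota)$. Applying the first isomorphism theorem to the restriction $\pi_\Q|_{A(q,\mathbf{S})}$ gives
\[
\pi_\Q(A(q,\mathbf{S})) \cong A(q,\mathbf{S})/\bigl(A(q,\mathbf{S}) \cap \Image(\iota)\bigr),
\]
so (ii) follows from (i). I would then unpack the defining condition of $\Q(q,T)$ by direct analogy with $K_{\ff}(q,S_{\ff})$, namely $\Q(q,T) = \{\al \Q^{*q} : q \mid \ord_p(\al)\ \text{for all primes}\ p \notin T\}$.

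For (i), the arithmetic input is the standard ramification identity $\ord_\p(\al) = e_\p \cdot \ord_p(\al)$ for $\al \in \Q^*$, where $\p$ is a prime of $\O_{K_{\ff}}$ above a rational prime $p$ and $e_\p = \ord_\p(p\O_{K_{\ff}})$ is the ramification index. For the forward containment $\iota(\Q(q,T)) \subseteq A(q,\mathbf{S})$, pick $\al$ representing a class in $\Q(q,T)$, fix $\ff \in \Ff_\Q$, and consider any prime $\p \notin S_{\ff}$ lying above a rational prime $p$. If $p \in T$, the defining condition of $T$ combined with $\p \notin S_{\ff}$ forces $q \mid e_\p$, so $q \mid e_\p \ord_p(\al) = \ord_\p(\al)$. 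If $p \notin T$, then $q \mid \ord_p(\al)$ by assumption, so again $q \mid \ord_\p(\al)$. In either case the condition defining $A(q,\mathbf{S})$ is satisfied componentwise.

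For the reverse containment $\Image(\iota) \cap A(q,\mathbf{S}) \subseteq \iota(\Q(q,T))$, take $\al \in \Q^*$ with $\iota(\al) \in A(q,\mathbf{S})$ and any prime $p \notin T$. The contrapositive of the definition of $T$ supplies $\ff \in \Ff_\Q$ and $\p \in \Supp(p\O_{K_{\ff}})$ with $\p \notin S_{\ff}$ and $q \nmid e_\p$ simultaneously. The membership $\iota(\al) \in A(q,\mathbf{S})$ then yields $q \mid \ord_\p(\al) = e_\p \ord_p(\al)$, and since $q$ is prime with $q \nmid e_\p$ we conclude $q \mid \ord_p(\al)$. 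Hence $\al$ represents a class in $\Q(q,T)$. The only subtle point in this argument is the bookkeeping on quantifiers in the definition of $T$: the witness prime $\p$ above $p \notin T$ must fail \emph{both} defining conditions at once, and it is precisely this simultaneous failure that produces a ramification index coprime to $q$ at a prime outside $S_{\ff}$, which is exactly what the final divisibility step consumes.
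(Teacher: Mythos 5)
Your proof is correct and follows essentially the same route as the paper's: both establish (i) by way of the ramification identity $\ord_\p(a\O_{K_h}) = \ord_\p(p\O_{K_h})\cdot\ord_p(a)$ together with the case split on $p\in T$ versus the contrapositive of $T$'s definition, and both obtain (ii) from (i) via the first isomorphism theorem applied to $\pi_\Q|_{A(q,\mathbf{S})}$ using $\Kernel(\pi_\Q)=\Image(\iota)$. The only difference is expository: you spell out the quantifier bookkeeping and the simultaneous failure of both clauses in $T$'s definition, which the paper leaves more compressed.
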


\begin{proof}
\ben{i}
\item First suppose that $aA^{*q}\in \iota\left(\Q(q,T)\right)$. Then $q\mid \ord_p(a)$ for all $p\notin T$. Let $\p$ be a prime of $\O_{K_{h}}$ for some $h$, with $\p\notin S_h$. We know that 
\[
\ord_\p(a\O_{K_{h}})=\ord_\p(p\O_{K_{h}})\x \ord_p(a)
\]
so by the definition of $T$ at least one of the factors will be divisible by $q$ thus also their product, which implies that $aA^{*q}\in A(q,	\mathit{\mathbf{S}})$.\\For the opposite inclusion, suppose that $a\in \Q^*$ and $aA^{*q}\in A(q,\mathit{\mathbf{S}})$. Then $q\mid \ord_\p(a\O_{K_{h}})$ for all $h\in\Ff_\Q$ and for all $\p\notin S_h$ so if $p\notin T$, then $q$ must divide $\ord_p(a)$ since $q$ divides the product but not the first factor in the equality above.
\item We have
\[
\pi_\Q\left(A(q,\mathit{\mathbf{S}})\right)\cong \frac{A(q,\mathit{\mathbf{S}})}{\Kernel(\pi_\Q)\cap A(q,\mathit{\mathbf{S}})}=\frac{A(q,\mathit{\mathbf{S}})}{\Image(\iota)\cap A(q,\mathit{\mathbf{S}})}\overset{(i)}{=}\frac{A(q,\mathit{\mathbf{S}})}{\iota\left(\Q(q,T)\right)}
\]
\een
\end{proof}

Let us denote $\pi_\Q(H_\Q(\mathbf{S}))$ by $\bar H_\Q(\mathbf{S})$. Since $\Image(\de_\Q)\subseteq H_\Q(\mathbf{S})$, we have that \begin{equation}\la{imageofmufirst}
\Image(\mu_\Q)\subseteq\bar H_\Q(\mathbf{S}).
\end{equation}
In the following section we will see how $\Image(\mu_\Q)$ is contained in a potentially strict subset of $\bar H_\Q(\mathbf{S})$ and provide an algorithm to compute it.

\section{The Selmer set}\la{imagemup}

\subsection{Determining the image of $\mu_{\Q_p}$}\la{theimageofmup}

In this section we will provide an algorithm which determines $\Image(\mu_{\Q_p})$ for a rational prime $p$. The algorithm relies on the fact that points of $C$ which lie in a ``sufficiently small'' $p$-adic neighborhood, have the same image under $\mu_{\Q_p}$.

The diagram below is crucial in the process of refining the possible image of $\mu$ even further:
\[
\begin{CD}C(\Q)  @>{\mu_\Q}>>  A_\Q^*/\Q^*A_\Q^{*q}\\ @V{\iota_p}VV   @VV{r_p}V\\ C(\Q_p)  @>{\mu_{\Q_p}}>>  A_{\Q_p}^*/\Q_p^*A_{\Q_p}^{*q}\end{CD}
\]
By commutativity, if we have a rational point $P\in C(\Q)$ then $\mu_{\Q_p}\circ\iota_p(P)=r_p\circ\mu_\Q (P)$. Therefore we have that 
\begin{equation}\label{imageofmu}
\Image(\mu_\Q)\subseteq r_p^{-1}(\Image(\mu_{\Q_p}))\cap \bar H_\Q(\mathbf{S})\end{equation}

\begin{defi}\label{selmerset}
The \textbf{Selmer set} over $\Q$ of the superelliptic curve $C$, is defined as
\[
\sel=\left\{[\al]\in \bar H_\Q(\mathbf{S}) : r_p([\al])\in\Image(\mu_{\Q_p})\ \text{for all rational primes }p\right\}.
\]
\end{defi}
\begin{rema}
Strictly speaking, the set defined here corresponds to the \lq\lq fake \rq\rq\ Selmer set found generally in the literature (e.g. \cite{MR2521292} and \cite{brendan1}). Roughly, the difference between the fake and the actual Selmer set is that the latter distinguishes between covers (defined in Section \ref{coverssub}) $\phi_{\al}:D_{\al}\rightarrow C$ and $\phi_{\al'}:D_{\al'}\rightarrow C$ when $\phi_{\al}$ and $\phi_{\al'}$ are different even if $D_\al$ and $D_{\al'}$ are isomorphic. Since we are not using both sets, we omit the \lq\lq fake\rq\rq\ from the notation.
\end{rema}

\noindent After considering the inclusion \eqref{imageofmufirst} in the end of Section \ref{ss14} and the inclusions \eqref{imageofmu} for every rational prime $p$ we get that
\[
\Image(\mu_{\Q})\subseteq \sel.
\]

Let $h\in\Ff_{\Q_p}$ and denote by $\p_h$ the prime of $\O_{K_h}$. The following two lemmas are used to show that the analytic space $C(\Q_p)$ can be covered by a finite number of neighborhoods, where the map $\mu_{\Q_p}$ is constant.  In practice $X_k$ (or $X$) will be a finite precision approximation to the first coordinate of a point $(X',Y',1)\in C(\Q_p)$.

\begin{lemm}\la{closeenough}
Suppose that $X',X_k\in\Z_p$ with $\ord_p(X'-X_k)\geq k$
\ben{i}
\item If $k\geq\frac{2\ord_{\p_h}(q)+\ord_{\p_h}(X_k-\th_h)+1}{e_{{\p_h}/p}}$ then $(X_k-\th_h)K_{h}^{*q}=(X'-\th_h)K_{h}^{*q}$.
\item If $k\geq\frac{2\ord_{\p_h}(q)+\ord_{\p_h}(\ft_h (X_k))+1}{e_{{\p_h}/p}}$ then $\ft_h (X_k)K_{h}^{*q}=\ft_h (X')K_{h}^{*q}$.
\een
\end{lemm}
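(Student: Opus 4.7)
My approach is to reduce both claims to a single principal-unit fact in the local field $K_h$: every element of $1 + \p_h^{2\ord_{\p_h}(q) + 1}$ is a $q$-th power in $K_h^*$. I would establish this at the outset by applying Hensel's lemma to $F(T) = T^q - u$ at $T_0 = 1$; since $\ord_{\p_h}(F(1)) = \ord_{\p_h}(1 - u) \geq 2\ord_{\p_h}(q) + 1 > 2\ord_{\p_h}(q) = 2\ord_{\p_h}(F'(1))$, the strong form of Hensel's lemma yields a $q$-th root of $u$. Both parts of the lemma then amount to showing the appropriate ratio lies in this principal-unit subgroup.

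For part (i), the hypothesis implicitly forces $X_k - \th_h \neq 0$ (otherwise $\ord_{\p_h}(X_k - \th_h) = \infty$), so I can write
\[
\frac{X' - \th_h}{X_k - \th_h} = 1 + \frac{X' - X_k}{X_k - \th_h}.
\]
Since $X' - X_k \in \Z_p$, we have $\ord_{\p_h}(X' - X_k) = e_{\p_h/p}\ord_p(X' - X_k) \geq e_{\p_h/p} k$. Subtracting $\ord_{\p_h}(X_k - \th_h)$ and using the hypothesis on $k$, the second summand has $\p_h$-valuation at least $2\ord_{\p_h}(q) + 1$, so the ratio belongs to $1 + \p_h^{2\ord_{\p_h}(q)+1}$ and is a $q$-th power.

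For part (ii), the same strategy works provided I can show $\ord_{\p_h}(\ft_h(X') - \ft_h(X_k)) \geq \ord_{\p_h}(X' - X_k)$. This reduces via the telescoping factorization $\ft_h(X') - \ft_h(X_k) = (X' - X_k)R(X', X_k)$ to the integrality claim $\ft_h \in \O_{K_h}[x]$. This integrality step is the one I expect to warrant the most care: $\th_h$ is a root of the integer polynomial $f$, hence an algebraic integer, so $(x - \th_h)^{n_h}$ is monic in $\O_{K_h}[x]$, and polynomial long division of $f \in \Z[x] \subseteq \O_{K_h}[x]$ by this monic divisor keeps all coefficients in $\O_{K_h}$. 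Consequently $R \in \O_{K_h}[X', X_k]$, so $\ord_{\p_h}(\ft_h(X') - \ft_h(X_k)) \geq e_{\p_h/p} k$. Dividing by $\ft_h(X_k)$ and invoking the hypothesis on $k$ places $\ft_h(X')/\ft_h(X_k)$ in $1 + \p_h^{2\ord_{\p_h}(q) + 1}$ (in particular $\ft_h(X') \neq 0$, since otherwise we would obtain $\ord_{\p_h}(\ft_h(X_k)) \geq e_{\p_h/p} k > \ord_{\p_h}(\ft_h(X_k))$), and the principal-unit fact closes the argument.
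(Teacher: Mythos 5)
Your overall strategy matches the paper's exactly: write the relevant quantity as a ratio and apply Hensel's lemma --- equivalently, the principal-unit fact $1 + \p_h^{2\ord_{\p_h}(q)+1} \subseteq K_h^{*q}$ --- together with a valuation estimate. Part (i) is correct. In part (ii), however, your justification of the integrality of $\ft_h$ rests on the false premise that ``$\th_h$ is a root of the integer polynomial $f$, hence an algebraic integer.'' Being a root of a \emph{non-monic} integer polynomial does not make an element an algebraic integer (consider $2x-1$ and its root $1/2$). What you actually need is $\th_h \in \O_{K_h}$, i.e.\ $h \in \Z_p[x]$; this is automatic when $p \nmid a_n$, by Gauss's lemma applied to the monic polynomial $f/a_n \in \Z_p[x]$, but when $p \mid a_n$ the Newton polygon of $f$ over $\Q_p$ may have a segment of positive slope, and the corresponding $\th_h$ has negative $\p_h$-valuation. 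To be fair, the paper's own proof glosses over this same point: it writes $\ft_h(X_k + up^k) = \ft_h(X_k) + vp^k$ with ``$v \in \Z_p$,'' which cannot be literally correct since $\ft_h \in K_h[x]$, and it offers no reason why $v$ should be $\p_h$-integral.

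A way to close the gap uniformly is to expand
\[
\ft_h(X') - \ft_h(X_k) = \frac{f(X')(X_k - \th_h)^{n_h} - f(X_k)(X' - \th_h)^{n_h}}{(X' - \th_h)^{n_h}(X_k - \th_h)^{n_h}},
\]
observe that the numerator vanishes on $X'=X_k$ and so equals $(X'-X_k)\,M(X',X_k)$ with $M$ a polynomial over $\Z[\th_h]$ of degree at most $n_h$ in $\th_h$, and note that if $\ord_{\p_h}(\th_h) = -s < 0$ then $\ord_{\p_h}(X'-\th_h)=\ord_{\p_h}(X_k-\th_h)=-s$, the denominator has valuation $-2sn_h$, and $\ord_{\p_h}(M(X',X_k)) \geq -sn_h$. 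The net effect is $\ord_{\p_h}(\ft_h(X') - \ft_h(X_k)) \geq e_{\p_h/p}k + sn_h \geq e_{\p_h/p}k$. Combined with your polynomial-division argument in the case $\th_h \in \O_{K_h}$ (where the bound $\geq e_{\p_h/p}k$ is immediate), this yields the estimate needed in all cases. As written, though, your argument only covers the integral case, and does so via a false premise.
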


\begin{proof}
\ben{i}
\item By the assumption $X'=X_k+up^k$ where $u\in\Z_p$. So
\[
\frac{X'-\th_h}{X_k-\th_h}=1+\frac{up^k}{X_k-\th_h}.
\]
Now let $\tau(t)=\frac{X'-\th_h}{X_k-\th_h}-t^q$. By Hensel's lemma we have that the following is a sufficient condition for $\tau$ to have a solution in $K_{h}$,
\[
\ord_{\p_h}(\tau(1))\geq 2\ord_{\p_h}\left(\frac{d\tau}{dt}(1)\right)+1\]\[k\ord_{\p_h}(p)-\ord_{\p_h}(X_k-\th_h)\geq 2\ord_{\p_h}(q)+1\]So as long as the condition of the lemma is satisfied Hensel's lemma ensures that $(X_k-\th_h)$ and $(X'-\th_h)$ are the same modulo $K_{h}^{*q}$.
\item This is very similar to the previous part. Just use the fact that\\ $\ft_h (X')=\ft_h (X_k+up^k)=\ft_h (X_k)+vp^k$ where $v\in\Z_p$ and set $\tau(t)=\frac{\ft_h (X')}{\ft_h (X_k)}-t^q$.
\een
\end{proof}

\begin{lemm}\label{lemterminate}
If $\{X_k\}_{k=1}^\infty\subset\Z_p$ is a sequence satisfying $\ord_p(X'-X_k)\geq k$  for some $X'\in\Z_p$ and every $k$, then there exists $N\in\Z_{>0}$ such that $X_N$ satisfies at least one of conditions (i) or (ii) of Lemma \ref{closeenough}.
\end{lemm}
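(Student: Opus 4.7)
The plan is to let $X'\in\Z_p$ denote the limit of the Cauchy sequence $\{X_k\}$ (which exists by the hypothesis $\ord_p(X'-X_k)\ge k$), and to split into two cases according to whether $X'$ equals $\th_h$ inside $K_h$. Note that this can happen only if $K_h=\Q_p$ (i.e.\ $h$ is linear), since $X'\in\Q_p$ while $\th_h\in K_h$.

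\textbf{Case 1: $X'\neq\th_h$ in $K_h$.} Then $M:=\ord_{\p_h}(X'-\th_h)$ is finite. The relation $\ord_p(X_k-X')\ge k$ gives $\ord_{\p_h}(X_k-X')\ge k\,e_{\p_h/p}$, so as soon as $k\,e_{\p_h/p}>M$ the ultrametric inequality forces $\ord_{\p_h}(X_k-\th_h)=M$. Condition (i) of Lemma \ref{closeenough} then becomes the fixed inequality $k\,e_{\p_h/p}\ge 2\ord_{\p_h}(q)+M+1$, which holds for all sufficiently large $k$.

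\textbf{Case 2: $X'=\th_h$.} Here $K_h=\Q_p$ and $\th_h\in\Z_p$. Because $h$ divides $f$ over $\Q_p$ with multiplicity exactly $n_h$, we have $\ft_h(\th_h)\neq 0$, so $M':=\ord_{\p_h}(\ft_h(X'))$ is finite. Writing $\ft_h(X_k)-\ft_h(X')=(X_k-X')\,R(X_k,X')$ by a Taylor-type identity with $R\in K_h$ of bounded denominator, the bound $\ord_{\p_h}(X_k-X')\ge k\,e_{\p_h/p}$ tells us $\ord_{\p_h}\bigl(\ft_h(X_k)-\ft_h(X')\bigr)\to\infty$, hence $\ord_{\p_h}(\ft_h(X_k))=M'$ for all $k$ large enough. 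Condition (ii) of Lemma \ref{closeenough} then reduces to the fixed inequality $k\,e_{\p_h/p}\ge 2\ord_{\p_h}(q)+M'+1$, which again holds eventually.

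In either case we get an $N$ with the required property, completing the proof. The only slightly subtle point is the continuity estimate in Case 2: one must know that the coefficients of the auxiliary polynomial $R$ have $\p_h$-valuation bounded below by a constant independent of $k$, so that large $\p_h$-adic closeness of $X_k$ to $X'$ transfers to large $\p_h$-adic closeness of $\ft_h(X_k)$ to $\ft_h(X')$; this is immediate since $\ft_h$ is a fixed polynomial with coefficients in $K_h$ and $X_k,X'$ lie in the compact set $\Z_p\subset\O_{K_h,\p_h}$.
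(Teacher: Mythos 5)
Your proof is correct and rests on the same key fact as the paper's: that $X'-\th_h=0$ and $\ft_h(X')=0$ cannot both hold (since $f$ is $q$-th power-free, $\ft_h(\th_h)\neq 0$). The paper argues by contradiction (assume neither condition ever holds, deduce both valuations tend to infinity, hence both vanish at $X'$), whereas you argue directly via a case split on whether $X'=\th_h$, filling in the ultrametric stabilization of $\ord_{\p_h}(X_k-\th_h)$ or $\ord_{\p_h}(\ft_h(X_k))$; the two arguments are contrapositives of each other.
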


\begin{proof}
Suppose such $N$ does not exist. This means that for every $k$ we have
\[
\min\left\{\frac{2\ord_{\p_h}(q)+\ord_{\p_h}(X_k-\th_h)+1}{e_{{\p_h}/p}},\frac{2\ord_{\p_h}(q)+\ord_{\p_h}(\ft_h (X_k))+1}{e_{{\p_h}/p}}\right\}>k
\]
and therefore both $\ord_{\p_h}(X_k-\th_h)$ and $\ord_{\p_h}(\ft_h(X_k))$ tend to infinity as $k$ tends to infinity. But since $\{X_k\}_{k=1}^\infty$ converges to $X'$ we have that $(X'-\th_h)=\ft_h(X')=0$, a contradiction.
\end{proof}

A clear distinction between the case of hyperelliptic ($q=2$) and superelliptic ($q>2$) curves is that, a superelliptic curve is allowed to have singularities, since $f$ being $q$-th power-free is no longer equivalent to $f$ not having repeated roots. At this point we would like to use some version of Hensel's Lemma to determine whether our finite precision $X_k$ lifts to $\Z_p$ as the first coordinate of a point $(X',Y',1)\in C(\Q_p)$. We have to be careful not to ask this question for points approximating one of the singularities as that would result in an infinite loop. Thus we have to determine the size of the $\mu_{\Q_p}$-constant neighborhood around each singularity (which is defined over $\Q_p$) in advance and compute its image.

Let
\[
\Ff_{\Q_p}^{lsi}=\left\{h \in \Ff_{\Q_p} : \degree(h)=1,n_h>1,\th_h\in\Z_p\right\},
\]
where the exponent $lsi$ stands for linear,singular and integral. Elements of this set correspond to the singular points on $C$ that are defined over $\Q_p$, but are of the form $(\th_h,0,1)$ with $\th_h\in\Z_p$. The last condition arises because we split the computation into two parts, the first being the determination of the image under $\mu_{\Q_p}$ of points of the form $(X',Y',1)\in C(\Q_p)$, with $X',Y'\in\Z_p$.

Consider the following functions:

\begin{algorithmic}[1]
\Function{SizeOfNeighborhood}{$h$}
	\State $k_h\gets 0$
	\State \textsc{Finish} $\gets$ \textbf{false}
	\While{$\left[\right.$ \textsc{Finish} = \textbf{false}$\left.\right]$}
		\State $k_h\gets k_h+1$	
		\State $X\gets X\in \Z\subset\Z_p : \ord_{\p_h}(X-\th_h)\geq k_h$
		\State \textsc{ValList}$\gets\left\{\frac{2\ord_{\p_{h'}}(q)+\ord_{\p_{h'}}(X-\th_{h'})+1}{e_{{\p_{h'}}/p}}:h'\in\Ff_{\Q_p}\setminus\{h\}\right\}\cup \left\{2\ord_{\p_h}(q)+\ord_{\p_h}(\ft_h (X))+1\right\}$
		\If{$\left[\right.$ $\maxi(\textsc{ValList})\leq k_h$ $\left.\right]$}
			\State \textsc{Finish} $\gets$ \textbf{true}
		\EndIf
	\EndWhile\\
	\hskip 0.6cm \Return{$k_h,X$}
\EndFunction
\end{algorithmic}

Note that the function \textsc{SizeOfNeighborhood} only makes sense when $\degree(h)=1$ otherwise we would not be able to find an $X$ satisfying the condition of step 6 for every given $k_h$ (that would imply that $\th_h\in\Z_p$), and we will actually only apply it to elements of $\Ff_{\Q_p}^{lsi}$. This function has a double use: The returned value of $X$ will be used to compute $\mu_{\Q_p}(\th_h,0,1)$, and $k_h$ will keep track of the size of the $\mu_{\Q_p}$-constant neighborhood around the singularity. Now for $h\in\Ff_{\Q_p}^{lsi}$ set 
\[
\U_h=\left\{X\in \Z_p : \ord_p(X-\th_h)\geq k_h\right\}
\]
and
\[
\U=\bigcup_{h\in\Ff_{\Q_p}^{lsi}}\U_h.
\]

The following function can be thought of as partitioning $\Z_p$ into neighborhoods, with the partition becoming finer close to the singularities. Then the function \textsc{LocalImage} will test each of these neighborhoods for elements that lift to points on $C$, and if necessary partition them further into $\mu_{\Q_p}$-constant parts.
\begin{algorithmic}[1]
\Function{ComputeInputList}{$\textsc{Reps},k,\textsc{List}$}
	\For{$X\in\textsc{Reps}$}
		\If{$\left[\right.$ $\exists h\in\Ff_{\Q_p}^{lsi}:\left[\right. k_h>k$  \textbf{and}  $\ord_p(X-\th_h)\geq k_h$ $\left.\right]$ $\left.\right]$}
		\hskip 1.7cm \State \textsc{List} $\gets$\textsc{ComputeInputList}$\left(\left\{X+tp^k : t \in\{0,\ldots ,p-1\}\right\},k+1,\textsc{List}\right)$
		\Else
			\State \textsc{List} $\gets$ \textsc{List} $\cup \{(X,k)\}$
		\EndIf
	\EndFor\\
	\hskip 0.6cm \Return{\textsc{List}}
\EndFunction
\end{algorithmic}

At this point we should stress that the functions \textsc{SizeOfNeighborhood} and \textsc{ComputeInputList} would be redundant if there were no singularities, and the function \textsc{LocalImage} would be sufficient to compute the local image. In that case the input (\textsc{List}=$\{0,\ldots,p-1\}$,\textsc{Image}=$\{ \}$) would produce the require result.

\begin{algorithmic}[1]
\Function{LocalImage}{$\textsc{List},\textsc{Image}$} 
	\For{$\left[\right.$ $(X,k)\in \textsc{List}$ $\left.\right]$} 
		\If{$\left[\right.$ $X\in\mathcal{U}$ \textbf{or} $\nexists (X',Y',1)\in C(\Q_p):\ord_p(X'-X)\geq k$ $\left.\right]$} 
			\hskip 1.7cm \State \textsc{List} $\gets$ \textsc{List}$\setminus\{(X,k)\}$
		\Else
			\State \textsc{ValList}$\gets \left\{\left[\frac{2\ord_{\p_h}(q)+\ord_{\p_h}(X-\th_h)+1}{e_{{\p_h}/p}},\frac{2\ord_{\p_h}(q)+\ord_{\p_h}(\ft_h (X))+1}	{e_{{\p_h}/p}}\right]: h\in\Ff_{\Q_p}\right\}$
			\If {$\left[\right.$ $\forall [k_1,k_2]\in$\textsc{ValList} $\min(k_1,k_2)\leq k$ $\left.\right]$}
				\State \textsc{NewElement} $\gets 1$
				\For{$\left[\right.$ $[k_1,k_2]\in$\textsc{ValList} $\left.\right]$}
					\If {$\left[\right.$ $k_1\leq k_2$ $\left.\right]$}
						\State \textsc{NewElement} $\gets$ \textsc{NewElement}$\x (X-\th_h)K_{h}^{*q}$
					\Else
						\State \textsc{NewElement}$\gets$ \textsc{NewElement} $\x \sqrt[n_h]{\ft_h(X)^{-1}K_{h}^{*q}}$
					\EndIf
				\EndFor
				\State \textsc{NewElement}$\gets\ \pi_{\Q_p}\left(\textsc{NewElement}\right)$
				\hskip 2.3cm \State \textsc{Image} $\gets$ \textsc{Image} $\cup \{$\textsc{NewElement}$\}$
			\Else\\
				\hskip 2.3cm \textsc{NewList}$\gets\{(X+tp^k,k+1) : t \in\{0,\ldots ,p-1\}\}$\\
				\hskip 2.3cm \textsc{Image}$\gets$\textsc{LocalImage}($\textsc{NewList},\textsc{Image}$)
			\EndIf
		\EndIf
	\EndFor\\
	\hskip 0.6cm \Return{\textsc{Image}}
\EndFunction
\end{algorithmic}

With the help of the \textsc{SizeOfNeighborhood} function, we pre-compute and store in the variable \textsl{image} the images of the singular points under $\mu_{\Q_p}$. Using \textsc{LocalImage} with initial input\[\left(\textsc{ComputeInputList}\left(\{0,\ldots,p-1\},1,\{\}\right),\text{image}\right)\]we get as output a set $V_1\subseteq A_{\Q_p}^*/\Q_p^*A_{\Q_p}^{*q}$ which satisfies $\mu_{\Q_p}(U_1)=V_1$, where $U_1=\{(X,Y,1)\in C(\Q_p):X,Y\in\Z_p\}$. With slight modifications to the routine above we can also obtain as output a set $V_2$ such that $\mu_{\Q_p}(U_2)=V_2$, where $U_2=\{(1,Y,pZ)\in C(\Q_p):Y,Z\in\Z_p\}$. Thus we obtain the complete image of $\mu_{\Q_p}$, since $U_1\cup U_2=C(\Q_p)$.

\begin{rema}
We can be certain that the routine LocalImage terminates after a finite number of steps because of Lemma \ref{lemterminate}. An infinite loop would correspond to a sequence $\{X_k\}_{k=1}^\infty$ converging to some $X'\in\Z_p$ satisfying $X'-\th_h=\ft_h (X')=0$ which is impossible. Also note that the If statement at step 3, ensures that if $X\in\mathcal{U}$, in other words if our approximate value is very close to the first coordinate of one of the singular points, then it is excluded from \textsc{List} and we do not try to lift it using Hensel's Lemma, which would have resulted in an infinite loop.
\end{rema}

\subsection{The corresponding covers}\label{coverssub}

For every $\al\in A_\k^*$ such that $[\al]:=\al\Q^*A^{*q}\in \bar H_\k$ we can construct an unramified cover of $C$ of degree $q^{d-2}$
\[
\phi_\al : D_\al\rightarrow C,
\]
defined over $\k$ satisfying the properties
\bea 
D_\al(\k)\neq\emptyset & \Leftrightarrow & \left[\al\right]\in \Image(\mu_\k)\\
\left[\al\right] = \left[\al'\right] & \Rightarrow & D_\al\cong D_{\al '}  
\eea

First let us give an equivalent description of the $\k$-algebra $A_\k$. Denote the absolute Galois group $\Gal(\bar \k/\k)$ by $\G_\k$. We fix embeddings $K_h\hookrightarrow\bar\k$ for all $h\in\Ff_\k$ that are compatible in the sense that they agree on the intersections $K_h\cap K_{h'}$ for $h,h'\in\Ff_{\k}$. We then get an inclusion $\Theta_\k\hookrightarrow\Theta_{\bar\k}$ and we can treat elements $\al_h\in K_h$ as elements of $\bar\k$ and elements of $\Th_\k$ as elements of $\Th_{\bar\k}$.

\begin{lemm}
\[
A_\k\cong \Maps_\k(\Th_{\bar\k},\bar \k),
\]
where the right hand side is the set of all $\G_\k$-equivariant maps from $\Th_{\bar\k}$ to $\bar\k$.
\end{lemm}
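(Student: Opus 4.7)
The plan is to exhibit an explicit $\k$-algebra isomorphism by decomposing both sides into factors indexed by $\Ff_\k$. On the left, the Chinese Remainder Theorem already gives $A_\k = \k[t]/(g) \cong \prod_{h \in \Ff_\k} K_h$. On the right, the set $\Th_{\bar\k}$ partitions as a disjoint union of $\G_\k$-orbits: since each $h \in \Ff_\k$ is irreducible over $\k$, its roots in $\bar\k$ form a single $\G_\k$-orbit, and the orbits corresponding to distinct $h, h' \in \Ff_\k$ are disjoint. Consequently
\[
\Maps_\k(\Th_{\bar\k}, \bar\k) \;\cong\; \prod_{h \in \Ff_\k} \Maps_\k(\G_\k\!\cdot\!\th_h,\, \bar\k),
\]
reducing the problem to identifying each factor with the corresponding $K_h$.

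The core step is the orbit-by-orbit identification $\Maps_\k(\G_\k\!\cdot\!\th_h, \bar\k) \cong K_h$. Using the chosen embedding $K_h \hookrightarrow \bar\k$, I identify $K_h$ with $\k(\th_h) \subseteq \bar\k$. Given $\al \in K_h$, define $\varphi_\al(\sigma\th_h) := \sigma(\al)$ for $\sigma \in \G_\k$. This is well-defined: if $\sigma\th_h = \sigma'\th_h$ then $\sigma^{-1}\sigma'$ lies in the stabilizer $\G_{\k(\th_h)} = \Gal(\bar\k/K_h)$, which fixes $\al \in K_h$, so $\sigma(\al) = \sigma'(\al)$. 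By construction $\varphi_\al$ is $\G_\k$-equivariant. Conversely, given an equivariant $\varphi : \G_\k\!\cdot\!\th_h \to \bar\k$, the value $\varphi(\th_h)$ must be fixed by $\G_{\k(\th_h)}$, hence lies in $K_h$. The assignments $\al \mapsto \varphi_\al$ and $\varphi \mapsto \varphi(\th_h)$ are visibly mutual inverses.

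Finally, I check that the resulting bijection $\prod_h K_h \to \Maps_\k(\Th_{\bar\k},\bar\k)$ is a $\k$-algebra homomorphism: pointwise addition and multiplication of equivariant maps correspond to componentwise addition and multiplication in the product $\prod_h K_h$, and the scalar action of $\k$ is obviously preserved, so the structures match factor by factor. The only real subtlety is bookkeeping with embeddings---one must fix the inclusions $K_h \hookrightarrow \bar\k$ at the outset so that the stabilizer of $\th_h$ is genuinely $\Gal(\bar\k/K_h)$; this is precisely what the compatibility assumption on the embeddings made before the lemma ensures. Beyond that, every step is routine Galois theory of étale $\k$-algebras.
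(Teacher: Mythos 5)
Your proof is correct and uses essentially the same isomorphism as the paper: $(\al_h)_h \mapsto \bigl(\sigma\th_h \mapsto \sigma(\al_h)\bigr)$ with inverse $\xi \mapsto \bigl(\xi(\th_h)\bigr)_h$. The paper merely exhibits these two formulas and leaves the verifications implicit, whereas you spell out the orbit decomposition, well-definedness via the stabilizer $\Gal(\bar\k/K_h)$, the mutual-inverse check, and the compatibility with the algebra structure.
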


\begin{proof}
The isomorphism is given by
\[
(\al_h)_{h\in\Ff_\k}\mapsto\left(\th_{h'}\mapsto\ ^\sigma\al_h : \sigma\in\G_\k,\ h\in\Ff_\k,\ ^\sigma\th_h=\th_{h'}\right)
\]
with inverse
\[
\xi\mapsto \left(\xi(\th_h)\right)_{\th_h\in\Th_\k\subseteq\Th_{\bar\k}}.
\]
\end{proof}

Let $\al\in A_\k^*$ such that $[\al]\in\bar H_\k$. Since $\al A^{*q}\in H_\k$, there exists $v\in\k^*$ with $a_nN_{A/\k}(\al)=v^q$. Let $D_\al$  be the variety in $\P^{d-1}\times C$ defined by
\begin{align}
\label{coverseq}\left((u_h)_{h\in\Ff_{\bar\k}},(X,Y,Z)\right)\in D_\al  \Leftrightarrow & \exists \lambda\neq 0 \text{ s.t. }\lambda\al (\th_h)u_h^q= X-\th_{h}Z\\
\nonumber & \text{for all } h\in\Ff_{\bar\k}\text{ and}\\
\nonumber & \lambda^{n/q}v\prod_{h\in\Ff_{\bar\k}}u_h^{n_h}=Y
\end{align}

\noindent We use a description of the covers $D_\al$ similar, at least in terms of their ambient space, to the one found in \cite{brendan1} and \cite{Siksek01022012}. We equip the first factor, $\P^{d-1}$, with the twisted $\G_\k$-action which permutes coordinates in the same way it permutes $\Th_{\bar\k}$. In other words if $\sigma\in\G_\k$ satisfies $^\sigma\th_h=\th_{h'}$, then $^\sigma u_{h}=u_{h'}$. It is then obvious from the definition that $D_\al$ is actually defined over $\k$. Projection to the second factor gives rise to the required covering map
\[
\phi_\al:D_\al\rightarrow C.
\]

\begin{lemm}
The map $\phi_\al: D_\al\rightarrow C$ is unramified of degree $q^{d-2}$. In particular $D_\al$ is a curve.
\end{lemm}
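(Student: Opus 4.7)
The plan is to compute the degree of $\phi_{\al}$ by counting a generic fiber, then to verify unramifiedness by a local Kummer analysis; the assertion that $D_{\al}$ is a curve will drop out of the degree count.

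\emph{Degree.} Passing to $\bar\k$, identify $\Ff_{\bar\k}$ with $\{x-\th_1,\ldots,x-\th_d\}$ and $\al$ with the tuple $(\al_i)_i$, where $\al_i:=\al(\th_i)\in\bar\k^*$. Fix $P_0=(X_0,Y_0,Z_0)\in C(\bar\k)$ with $Y_0\neq 0$; then $X_0-\th_iZ_0\neq 0$ for every $i$. On the affine chart $u_{h_0}=1$, the $h_0$-th cover relation fixes $\lambda=(X_0-\th_{h_0}Z_0)/\al_{h_0}\neq 0$, and each of the remaining $d-1$ equations $\lambda\al_iu_i^q=X_0-\th_iZ_0$ has exactly $q$ solutions in $u_i$, producing $q^{d-1}$ candidates. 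Using $F(X_0,Z_0)=Y_0^q$ together with the normalization $a_nN_{A/\k}(\al)=v^q$, a direct manipulation yields $(\lambda^{n/q}v\prod_iu_i^{n_i})^q=Y_0^q$, so the $Y$-equation imposes an additional $q$-th root of unity constraint. Because each $n_i$ is coprime to $q$, rescaling any single $u_i$ by a primitive $q$-th root of unity cycles through all $q$-th roots of unity, so exactly one candidate in $q$ survives, yielding $q^{d-2}$ geometric points in the fiber. In particular $\dim D_{\al}=1$.

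\emph{Unramifiedness.} Eliminating $\lambda$ on the chart $u_{h_0}=1$ presents $D_{\al}$ locally as the Kummer system $u_i^q=\al_{h_0}(X-\th_iZ)/[\al_i(X-\th_{h_0}Z)]$ for $i\neq h_0$, together with a compatibility on $Y$. Away from the locus $\{Y=0\}$ each right-hand side is a unit, so the cover is a composition of etale Kummer extensions. Ramification can therefore appear only above a point $P_0=(\th_{i_0},0,1)\in C$, and by switching charts we may assume $i_0\neq h_0$. Set $s=X-\th_{i_0}$, so $Y^q=s^{n_{i_0}}\tilde f_{i_0}(s)$ with $\tilde f_{i_0}(0)\neq 0$. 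Since $\gcd(n_{i_0},q)=1$ the Newton polygon of this equation consists of a single segment: there is a unique analytic branch of the normalization $\widetilde C\to C$ above $P_0$, and picking $a,b\in\Z$ with $an_{i_0}+bq=1$, the element $t:=Y^as^b$ is a uniformizer on this branch with $s$ and $Y$ equal to units times $t^q$ and $t^{n_{i_0}}$ respectively. On $D_{\al}$ the $i_0$-th equation reads $u_{i_0}^q=s\cdot(\text{unit})$, while $Y=u_{i_0}^{n_{i_0}}\cdot(\text{unit})$ since the other $u_i$ are units at $P_0$; hence $u_{i_0}$ is itself a uniformizer, $D_{\al}$ is smooth at each of the $q^{d-2}$ preimages, and the pullback of $t$ differs from $u_{i_0}$ by a unit. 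Thus the induced map on local rings is an isomorphism and $\phi_{\al}$ is unramified there.

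The most delicate step is the singular case $n_{i_0}>1$, where $C$ itself has a cusp at $P_0$: the naive scheme-theoretic map $D_{\al}\to C$ fails to be etale because the maximal ideal $(s,Y)$ at $P_0$ pulls back to the ideal $(u_{i_0}^q,u_{i_0}^{n_{i_0}})=(u_{i_0}^{\min(q,n_{i_0})})$ on $D_{\al}$, which is not the maximal ideal there. The argument is rescued by the coprimality hypothesis $\gcd(n_h,q)=1$ via the Newton-polygon and normalization reduction above, which is precisely what enables the descent framework to extend beyond the smooth (hyperelliptic) case. Once the local analysis is complete, ``unramified of degree $q^{d-2}$'' follows and the curve claim reduces to a dimension count.
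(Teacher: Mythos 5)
Your degree count mirrors the paper's: fix a generic point with $Y_0\neq 0$, set $u_{h_0}=1$, count $q$ choices for each of the remaining $d-1$ coordinates, and observe that the $Y$-equation cuts this down to $q^{d-2}$ using $\gcd(n_h,q)=1$. Where you diverge is in unramifiedness. The paper simply repeats the fiber count over points with $X-\th_{h'}Z=0$ (showing $u_{h'}=0$ is forced and the $Y$-relation becomes vacuous, again giving $q^{d-2}$ points) and concludes from constancy of the fiber size; it silently relies on the later proposition that $D_\al$ is nonsingular, and treats the passage through singular points of $C$ as unproblematic. You instead do an explicit local analysis: you observe the Kummer system is \'etale away from $Y=0$, then at a potential cusp $(\th_{i_0},0,1)$ use the Newton polygon (coprimality $\gcd(n_{i_0},q)=1$ giving a single segment and hence a single branch of $\widetilde C$) together with the identity $t=Y^as^b$ to produce a uniformizer, and check directly that $u_{i_0}$ uniformizes $D_\al$ above it, so the induced map on normalizations is unramified. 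Your route is more technical but more self-contained: it proves smoothness of $D_\al$ at the fibers over singular points on the spot (rather than deferring to the later Jacobian-criterion proposition) and makes explicit why ``unramified'' has to be read on the level of the normalization $\widetilde C$, which the paper elides. Both arguments are correct; yours makes visible exactly where the hypothesis $\gcd(n_h,q)=1$ is doing the work, which is the point your final paragraph rightly emphasizes.
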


\begin{proof}
Let $(X,Y,Z)\in C(\bar \k)$ and suppose $X-\th_{h_0}Z\neq 0$ for some $h_0\in\Ff_{\bar\k}$. Then $u_{h_0}\neq 0$ so we can set $u_{h_0}=1$. By doing this we also fixed $\lambda=\frac{X-\th_{h_0}Z}{\al(\th_{h_0})}$. So for all $u_{h}$ with $h\neq h_0$ we have
\[
u_{h}^q=\frac{\al(\th_{h_0})(X-\th_{h}Z)}{\al(\th_{h})(X-\th_{h_0}Z)}.
\]
Since we are over $\bar\k$, if there does not exist $h\in\Ff_{\bar\k}$ such that $X-\th_{h}Z=0$, then there are exactly $q$ different choices for the value of each of the $d-1$ $u_{h}$\rq s. Once $d-2$ of them have been chosen, the remaining one is decided by the relation
\[
\lambda^{n/q}v\prod_{h\in\Ff_{\bar\k}} u_h^{n_h} =Y.
\]
The fact that there is a unique choice for the value of the remaining coordinate uses that $\gcd(n_h,q)=1$ for every $h\in\Ff_{\bar\k}$. On the other hand if $X-\th_{h'}Z=0$ for some $h'\in\Ff_{\bar\k}\setminus\{h_0\}$ then $u_{h'}=0$ and there are $q$ choices for the remaining $d-2$ $u_{h}$\rq s. The extra relation in this case does not decide the value for any of them. We see that in both cases the fiber of $\phi_\al$ over $P\in C(\bar\k)$ contains exactly $q^{d-2}$ points.
\end{proof}

\begin{prop}\la{coversprop}
$D_\al(\k)\neq\emptyset$ if and only if $[\al]\in\Image(\mu_\k)$. Furthermore, if $[\al]=[\al']$ then $D_\al\cong D_{\al'}$ over $\k$. In other words, up to $\k$-isomorphism, $D_\al$ only depends on the class $[\al]$ in $A_\k^*/\k^*A_\k^{*q}$.
\end{prop}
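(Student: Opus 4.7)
The plan is to verify both claims by directly comparing the defining equations \eqref{coverseq} of $D_\al$ with the definition of $\de_\k$, using the identification $A_\k\cong\Maps_\k(\Th_{\bar\k},\bar\k)$ so that tuples $(u_h)_{h\in\Ff_{\bar\k}}$ are handled Galois-equivariantly throughout.

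For the direction $D_\al(\k)\ne\emptyset\Rightarrow[\al]\in\Image(\mu_\k)$, take a point $((u_h),(X,Y,Z))\in D_\al(\k)$ with scalar $\lambda\in\k^*$. At each $h\in\Ff_\k$ with $X-\th_h Z\ne 0$, the first defining equation immediately gives $\de_h(X,Y,Z)\equiv\lambda\al(\th_h)\pmod{K_h^{*q}}$. The subtle case is $X-\th_h Z=0$, which forces $h$ to be linear (so $K_h=\k$, $\th_h\in\k$), $u_h=0$, and $Y=0$; here $\de_h(X,Y,Z)$ is the unique $v_h$ with $v_h^{n_h}=\Ft_h(X,Z)^{-1}$ in $K_h^*/K_h^{*q}$. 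Since $K_h^*/K_h^{*q}$ is an $\F_q$-vector space and $\gcd(n_h,q)=1$, it suffices to check $(\lambda\al(\th_h))^{n_h}\Ft_h(X,Z)\in K_h^{*q}$. Expanding $\Ft_h(X,Z)=a_n\prod_{h'\in\Ff_{\bar\k},\,h'\ne h}(X-\th_{h'}Z)^{n_{h'}}$ (each factor is nonzero since the $\th_{h'}$ are distinct and necessarily $Z\ne 0$) and substituting the first defining equation at each $h'\ne h$, the expression collapses via $\sum_{h'}n_{h'}=n$, $v^q=a_nN_{A/\k}(\al)$, and $q\mid n$ into $\bigl(\lambda^{n/q}v\prod_{h'\ne h}u_{h'}^{n_{h'}}\bigr)^q$; Galois-invariance of the indexing set $\Ff_{\bar\k}\setminus\{h\}$ ensures that the inner quantity lies in $\k=K_h$, so this is genuinely a $q$-th power in $K_h$. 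I expect this singular-component calculation to be the main technical obstacle.

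For the converse, suppose $\de_\k(X,Y,Z)=\lambda\al$ in $A_\k^*/A_\k^{*q}$. At each non-singular component I solve $u_h^q=(X-\th_h Z)/(\lambda\al(\th_h))$ in $K_h^*$; at singular components I set $u_h=0$. The resulting tuple is Galois-equivariant by construction. If some component is singular then $Y=0$ and the second defining equation $\lambda^{n/q}v\prod u_h^{n_h}=Y$ holds trivially; otherwise, raising $\prod u_h^{n_h}$ to the $q$-th power and applying $Y^q=a_n\prod_{h'}(X-\th_{h'}Z)^{n_{h'}}$ together with $v^q=a_nN_{A/\k}(\al)$ shows that $\lambda^{n/q}v\prod u_h^{n_h}$ differs from $Y$ only by a $q$-th root of unity $\zeta\in\k$, which can be absorbed by replacing one $u_h$ by $\zeta^a u_h$ with $a$ chosen so that $an_h\equiv -1\pmod q$.

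Finally, for the isomorphism claim, write $\al'=\mu\al\beta^q$ with $\mu\in\k^*$, $\beta\in A_\k^*$, and define $\Phi:D_\al\to D_{\al'}$ by $((u_h),(X,Y,Z))\mapsto((u_h/\beta(\th_h)),(X,Y,Z))$ with the concurrent rescaling $\lambda\mapsto\lambda/\mu$. A direct substitution verifies the first equation, and the identity $N_{A/\k}(\beta)=\prod_h\beta(\th_h)^{n_h}$ together with the compatible choice $v'=\mu^{n/q}vN_{A/\k}(\beta)$—coming from $a_nN_{A/\k}(\al')=(v')^q$—gives the second. Since $\mu$ and $\beta$ are $\k$-rational, $\Phi$ is defined over $\k$, and exchanging the roles of $\al,\al'$ provides the inverse.
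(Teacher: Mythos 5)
Your proof follows essentially the same route as the paper's: in the forward direction you project $P\in D_\al(\k)$ to $C(\k)$ and compute $\de_\k$ component by component, with the same algebraic collapse $(\lambda\al(\th_h))^{n_h}\Ft_h(X,Z)=\left(\lambda^{n/q}v\prod_{h'\neq h}u_{h'}^{n_{h'}}\right)^q$ at singular components; in the converse you choose $q$-th roots and check equivariance; and the isomorphism is the same rescaling by $\beta(\th_h)$. One remark: you are in fact \emph{more} careful than the paper, which in the converse direction simply asserts that $\left((\beta(\th_h)),(X,Y,Z)\right)\in D_\al(\k)$ without checking the second defining equation $\lambda^{n/q}v\prod u_h^{n_h}=Y$. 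Your observation that this may fail by a $q$-th root of unity $\zeta\in\k$ is a genuine point. However, your proposed fix --- rescaling a single $u_h$ by $\zeta^a$ --- breaks the twisted $\G_\k$-equivariance of the tuple $(u_h)_{h\in\Ff_{\bar\k}}$ unless $h$ is a linear factor over $\k$; if you instead rescale a full Galois orbit indexed by $h\in\Ff_\k$, the product changes by $\zeta^{a d_h n_h}$, so you need $q\nmid d_h$ for some $h$, which is not guaranteed. Note that this is moot when $\mu_q(\k)=\{1\}$, in particular for $\k=\Q$ and for $\k=\Q_p$ with $p\not\equiv 1\pmod q$, which covers the cases the paper actually uses; but for $\Q_p$ with $p\equiv 1\pmod q$ the argument as written is incomplete, and the correct resolution is to recognize that the cover $D_\al$ also depends on the chosen $q$-th root $v$, and to make the choices of $v$ and $\beta$ compatibly, exactly as you do in the isomorphism step with your ``compatible choice'' of $v'$.
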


\begin{proof}
Let $P=\left((u_{h})_{h\in\Ff_{\bar\k}},(X,Y,Z)\right)\in D_\al(\k)$. Projecting gives $\phi_\al(P)=(X,Y,Z)\in C(\k)$ and if $X-\th_hZ\neq 0\ \forall\ h\in\Ff_{\bar\k}$ then 
\[
\delta_\k\left(\phi_\al(P)\right)=\left((X-\th_hZ)K_h^{*q}\right)_{\th_h\in\Th_\k\subseteq\Th_{\bar\k}}=\left(\lambda \alpha_hu_h^qK_h^{*q}\right)_{\th_h\in\Th_\k\subseteq\Th_{\bar\k}}
\] 
therefore $\mu_\k(\phi_\al(P))=[\al]$. On the other hand if there exists $h'\in\Ff_{\bar\k}$ such that $X-\th_{h'}Z=0$ then by definition of $\delta_{h'}$ we have
\begin{align*}
\delta_{h'}(\phi_\al(P)) & =  \sqrt[n_{h'}]{\Ft_{h'}(X,Z)^{-1}K_{h'}^{*q}}\\
 & = \sqrt[n_{h'}]{\frac{1}{a_n\displaystyle\prod_{h\in\Ff_{\bar\k}\setminus{h'}}(X-\th_{h}Z)^{n_h}}K_{h'}^{*q}}\\
 & = \sqrt[n_{h'}]{\frac{1}{a_n\lambda^{n-n_{h'}}\displaystyle\prod_{h\in\Ff_{\bar\k}\setminus{h'}}\left(\al(\th_{h})u_{h}^q\right)^{n_h}}K_{h'}^{*q}}\\
 & = \sqrt[n_{h'}]{\frac{(\lambda \al_{h'})^{n_{h'}}}{a_n\lambda^nN_{A/\k}(\al)}K_{h'}^{*q}}\\
 & = \sqrt[n_{h'}]{\frac{(\lambda \al_{h'})^{n_{h'}}}{\left(\lambda^{n/q}v\right)^q}K_{h'}^{*q}}\\
 & = \sqrt[n_{h'}]{(\lambda \al_{h'})^{n_{h'}}K_{h'}^{*q}}\\
 & = \lambda \al_{h'} K_{h'}^{*q}.
\end{align*}
So again $\mu_\k(\phi_\al(P))=[\al]$, since all the other components of $\delta_\k$ are evaluated without using cofactors.

For the other implication, suppose $[\al]\in\Image(\mu_\k)$. This means there exist $X,Y,Z\in \O_\k$, $\lambda\in\k^*$ and $\beta\in A_\k^*$ such that\[\lambda\alpha(\th_{h})\beta(\th_{h})^q=X-\th_{h} Z\] for $\th_h\in\Th_{\k}$. After conjugating these relations by elements of $\G_\k$ and using the fact that $\al$ and $\beta$ are $\G_\k$-equivariant, we obtain the corresponding relations for $\th_h\in\Th_{\bar\k}\setminus\Th_\k$. Then $\left((\beta(\th_{h}))_{h\in\Ff_{\bar\k}},(X,Y,Z)\right)\in D_\al(\k)$. Note that if for some $h'\in\Ff_{\k}$ we have $X-\th_{h'}Z=0$ then $d_{h'}=1$ and the corresponding coordinate $u_{h'}$ is equal to zero.

For the last statement suppose that we have $\al ,\al'\in A_\k^*$ with $[\al]=[\al']$. This implies that there exist $\lambda\in\k^*$ and $\beta\in A_\k^*$ such that $\al=\lambda\al'\beta^q$. By definition of the covers it is not hard to see that we can then map $D_\al$ to $D_{\al'}$ via $\left((u_{h})_{h\in\Ff_{\bar\k}},(X,Y,Z)\right)\mapsto \left((\beta(\th_{h})u_{h})_{h\in\Ff_{\bar\k}},(X,Y,Z)\right)$ which is clearly an isomorphism and it is defined over $\k$ since it is invariant under the twisted $\G_\k$-action.
\end{proof}

\begin{corr}\label{cor1}
Let $\mathcal{H}$ be any subset of $\bar H_\Q(\mathbf{S})$ containing $\Image(\mu_\Q)$, then 
\[
C(\Q)= \bigcup_{[\al]\in\mathcal{H}}\phi_\al\left(D_\al(\Q)\right).
\]
In particular the above equality holds for $\mathcal{H}=\sel$.
\end{corr}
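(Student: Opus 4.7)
The containment $\bigcup_{[\al]\in\mathcal{H}}\phi_\al(D_\al(\Q))\subseteq C(\Q)$ is immediate, since each $\phi_\al$ is defined over $\Q$ and therefore sends $\Q$-rational points to $\Q$-rational points. The content of the corollary is the reverse inclusion, and this is essentially a repackaging of Proposition \ref{coversprop} once the right point is chosen.

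The plan is as follows. Take $P\in C(\Q)$ and set $[\al]:=\mu_\Q(P)\in A_\Q^*/\Q^*A_\Q^{*q}$. Then by construction $[\al]\in\Image(\mu_\Q)\subseteq\mathcal{H}$, so it suffices to show $P\in\phi_\al(D_\al(\Q))$. Proposition \ref{coversprop} tells us that $D_\al(\Q)$ is non-empty, but in fact the proof of that proposition does more: starting from the relations $\lambda\al(\th_h)\beta(\th_h)^q=X-\th_h Z$ defining the condition $\mu_\Q(P)=[\al]$ (with $P=(X,Y,Z)$), it exhibits the explicit $\Q$-point $\bigl((\beta(\th_h))_{h\in\Ff_{\bar\Q}},(X,Y,Z)\bigr)\in D_\al(\Q)$, which by definition projects to $P$ under $\phi_\al$. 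Hence $P\in\phi_\al(D_\al(\Q))$, as required.

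For the final assertion it only remains to justify that $\mathcal{H}=\Se$ is a legitimate choice, i.e.\ that $\Image(\mu_\Q)\subseteq\Se$. This has already been recorded in Section \ref{imagemup}: the inclusion \eqref{imageofmufirst} places $\Image(\mu_\Q)$ inside $\bar H_\Q(\mathbf{S})$, and for every rational prime $p$ the commutativity of the diagram preceding \eqref{imageofmu} forces $r_p(\mu_\Q(P))=\mu_{\Q_p}(\iota_p(P))\in\Image(\mu_{\Q_p})$. Intersecting these conditions over all primes $p$ yields $\Image(\mu_\Q)\subseteq\Se$, and the previous paragraph then gives the desired equality.

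The only subtle point in this plan is the observation in the middle paragraph that the ``$D_\al(\Q)\neq\emptyset$'' half of Proposition \ref{coversprop} produces not merely some point of $D_\al(\Q)$ but a point lying over the prescribed $P$; everything else is bookkeeping.
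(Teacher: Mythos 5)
Your proof is correct and follows essentially the same route as the paper: the forward containment comes from $\phi_\al$ being defined over $\Q$, and the reverse comes from noting (as the paper does) that the proof of Proposition~\ref{coversprop} actually exhibits a $\Q$-point of $D_{\mu_\Q(P)}$ lying over any given $P\in C(\Q)$, not merely the nonemptiness of $D_\al(\Q)$. You also supply the justification for $\Image(\mu_\Q)\subseteq\Se$, which the paper establishes earlier in Section~\ref{imagemup} and does not repeat inside the corollary's proof.
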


\begin{proof}
Since for all $\al\in A_\Q^*$ such that $[\al]\in \bar H_\Q$, $D_\al$ and $\phi_\al$ are defined over $\Q$, we know that the right hand side is contained in $C(\Q)$. Also, from the proof of Proposition \ref{coversprop} we deduce that if we have $P\in C(\Q)$ and $\mu_\Q(P)=[\al]$ then there exists $Q\in D_{\al}(\Q)$ with $\phi_{\al}(Q)=P$. On the other hand, if $[\al]\in\mathcal{H}\setminus\Image(\mu_\Q)$ then $D_\al(\Q)=\emptyset$. 
\end{proof}

\begin{prop}
The curves $D_\al$ are non-singular.
\end{prop}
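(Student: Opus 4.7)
The plan is to describe $D_\al$ as a subvariety of $\P^{d-1}$ by eliminating the curve coordinates $(X,Y,Z)$, exhibit it as a complete intersection, and then apply the Jacobian criterion over $\bar\k$. First, the projection $D_\al\to\P^{d-1}$ is an isomorphism onto its image: given $(u_h)_{h\in\Ff_{\bar\k}}$, any two of the relations $\lambda\al(\th_h)u_h^q=X-\th_hZ$ determine $(X,Z,\lambda)$ up to the projective scaling of $(u_h)$, the second defining relation then prescribes $Y$, and the equation $Y^q=F(X,Z)$ of $C$ holds automatically thanks to the identity $a_nN_{A/\k}(\al)=v^q$. The residual constraints on $(u_h)$ are the $d-2$ consistency relations
\[
(\th_{h_2}-\th_{h_3})\al(\th_{h_1})u_{h_1}^q+(\th_{h_3}-\th_{h_1})\al(\th_{h_2})u_{h_2}^q+(\th_{h_1}-\th_{h_2})\al(\th_{h_3})u_{h_3}^q=0
\]
obtained by fixing $h_1,h_2$ and letting $h_3$ range over $\Ff_{\bar\k}\setminus\{h_1,h_2\}$. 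Since $D_\al$ is already known to be a curve, these cut it out as a complete intersection in $\P^{d-1}$.

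A key preliminary observation is that at every geometric point of $D_\al$ one has $\lambda\neq 0$ and at most one coordinate $u_{h'}$ vanishes. Indeed, if $u_{h'}=u_{h''}=0$ with $h'\neq h''$ then $X-\th_{h'}Z=X-\th_{h''}Z=0$, and since $\th_{h'}\neq\th_{h''}$ a short case analysis in $Z$ forces $(X:Y:Z)=(0:0:0)$, which is excluded in the ambient weighted projective plane; the case $\lambda=0$ is ruled out similarly.

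The main computation is the Jacobian rank. Picking an index $h_0$ distinct from the (at most one) vanishing $h'$ and working in the affine patch $u_{h_0}=1$, I would take the $d-2$ defining equations with $h_1=h_0$, a fixed auxiliary $h_2=h_0''\neq h'$, and varying $h_3\in\Ff_{\bar\k}\setminus\{h_0,h_0''\}$. The partial of such an equation with respect to $u_{h_3}$ is a nonzero constant times $u_{h_3}^{q-1}$. Ordering the rows so that the equation with $h_3=h'$ comes last and the columns so that $u_{h_0''}$ comes first, the Jacobian becomes lower triangular with nonzero diagonal entries: in each row with $h_3\neq h'$ the diagonal entry is $\partial/\partial u_{h_3}$, while in the final row the role of the diagonal entry is played by $\partial/\partial u_{h_0''}$, which compensates for the vanishing of $\partial/\partial u_{h'}$. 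This yields the required maximal rank $d-2$, so $D_\al$ is smooth.

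I expect the main obstacle to be precisely this bookkeeping above the singular locus of $C$: the careful choice of $h_0$ and $h_0''$ (both different from the zero index $h'$), together with the patch $u_{h_0}=1$, is essential for producing a non-zero substitute partial in the otherwise degenerate row of the Jacobian. This is where the possibility of singularities on $C$, absent in the hyperelliptic case, makes the argument more delicate than the analogue in \cite{MR2521292}.
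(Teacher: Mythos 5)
Your approach is genuinely different from the paper's, and the core computation is correct. The paper works in the ambient $\P^{d-1}\times C$ directly, restricts to an affine patch $u_{h_0}=Z=1$, and shows that the full $(d+1)\times d$ matrix of partials (displayed as \eqref{matrix} in the paper) has rank $d$ at every point, by observing that the entries $\al(\th_i)u_i^q-\al(\th_0)$ in the first row are never zero and at most one other row can vanish. You instead project away the $(X,Y,Z)$ coordinates, rewrite $D_\al$ as a complete intersection of $d-2$ diagonal degree-$q$ ``consistency'' equations in $\P^{d-1}$, and compute the much smaller $(d-2)\times(d-1)$ Jacobian. Both proofs confront the same essential difficulty introduced by singularities of $C$: at a point lying over a singularity of $C$, exactly one $u_{h'}$ vanishes and one column of the Jacobian degenerates. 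The paper handles this by noting that the corresponding row can be dropped without losing rank; you handle it by choosing both $h_0$ and the auxiliary $h_0''$ distinct from $h'$, so that the otherwise-degenerate row $h_3=h'$ picks up a nonzero entry in the $u_{h_0''}$ column. Your argument that at most one $u_h$ vanishes (via $(X:Y:Z)\neq(0:0:0)$) and that $Y^q=F(X,Z)$ is automatic from $a_nN_{A/\k}(\al)=v^q$ are both correct and necessary for the reduction.

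One point is glossed over: asserting that the projection $D_\al\to\P^{d-1}$ is ``an isomorphism onto its image'' requires more than injectivity on geometric points, which is all you actually verify. The cleanest repair is to observe that the formulas for $(X:Z)$ (solving the linear system from two of the relations) and for $Y$ define a morphism $s:V\to D_\al$ on the smooth complete intersection $V$ cut out by the consistency equations, with $\pi_1\circ s=\mathrm{id}_V$; this exhibits $\pi_1$ as a proper monomorphism, hence a closed immersion, so $D_\al\cong\pi_1(D_\al)$, which is a union of connected components of the smooth curve $V$. With that patch the argument is complete. What your route buys is a smaller Jacobian and the pleasant fact that $D_\al$ is displayed intrinsically as a complete intersection of Fermat-type forms in $\P^{d-1}$; what the paper's route buys is avoiding the closed-immersion bookkeeping entirely, at the cost of a larger matrix.
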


\begin{proof}
To show this let us restrict to the affine patch where $u_{h_0}\neq 0$ and $Z\neq 0$ for some $h_0\in\Ff_{\bar\k}$. We thus assume that $u_{h_0}=Z=1$, label the elements of $\Ff_{\bar\k}\setminus\{h_0\}$ using an index $i\in\{1,\ldots,d-1\}$ and rename $u_i:=u_{h_i},\ \th_i:=\th_{h_i}$ and $n_i:=n_{h_i}$ to simplify the notation. The defining equations for $D_\al$ in this patch become
\begin{align*}
\al(\th_{i})(X-\th_{0})u_{i}^q =& \al(\th_{0})(X-\th_{i})\qquad\text{for } i\in\{1,\ldots,d-1\}\\
v(X-\th_{0})^{n/q}\prod_{i=1}^{d-1}u_{i}^{n_{i}}=& Y. 
\end{align*}
We get the following $(d+1)\times d$ matrix of partial derivatives which represents a linear map whose cokernel is the cotangent space of $D_\al$ at a generic point of the affine patch.
\be\label{matrix}\small
\left(
\begin{array}{@{}c@{}c@{}c@{}cc@{}}
\al(\th_{1})u_{1}^q - \al(\th_{0}) & \al(\th_{2})u_{2}^q - \al(\th_{0}) & \ldots & \al(\th_{{d-1}})u_{{d-1}}^q - \al(\th_{0}) & v\left(\frac{n}{q}\right)(X-\th_{0})^{n/q-1}\displaystyle\prod_{i=1}^{d-1}u_{i}^{n_{i}}\\
q\al(\th_{1})(X-\th_{0})u_{1}^{q-1} & 0 & \ldots & 0 & vn_{1}(X-\th_{0})^{n/q}u_{1}^{n_{1}-1}\displaystyle\prod_{i\neq 0,1}u_{i}^{n_{i}}\\
0 & q\al(\th_{2})(X-\th_{0})u_{2}^{q-1} & \ldots & 0 & vn_{2}(X-\th_{0})^{n/q}u_{2}^{n_{2}-1}\displaystyle\prod_{i\neq 0,2}u_{i}^{n_{i}}\\
\vdots &\vdots &\vdots &\vdots &\vdots \\
0 & 0 &\ldots & q\al(\th_{d-1})(X-\th_{0})u_{d-1}^{q-1} & vn_{d-1}(X-\th_{0})^{n/q}u_{d-1}^{n_{d-1}-1}\displaystyle\prod_{i\neq 0,d-1}u_{i}^{n_{i}}\\
0 & 0 & \ldots & 0 & -1
\end{array}
\right)
\ee

\noindent This matrix has rank $d$ at every point of the affine patch. To see this note that the first $d-1$ entries of the first row, can never be zero, since this would contradict the fact that the roots of $g$ are distinct. For the same reason, at most one row can be identically zero and when this happens the $d\times d$ matrix obtained by deleting that row has rank $d$. A similar argument for all of the $2d$ affine patches covering $D_\al$ shows that $D_\al$ is non-singular. 
\end{proof}

\begin{prop}\label{nonsingfinite}
If $p$ is a rational prime such that $p\nmid q\Delta$ and $\al\in A_\Q^*$ such that $[\al]\in \bar H_\Q(\mathbf{S})$, then $D_{\beta}$ has good reduction, where $\beta\in A_{\Q_p}^*$ is such that $[\beta]=r_p([\al])$.
\end{prop}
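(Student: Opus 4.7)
The strategy is to find an integral model of $D_\beta$ whose reduction mod $p$ is smooth, by exploiting the freedom in the choice of representative $\beta$ within its class $[\beta]=r_p([\al])$, and then re-running the Jacobian argument from the preceding proposition over $\F_p$.

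First I would choose a convenient representative. Since $[\al]\in A(q,\mathbf{S})$, each component $\al_h$ is supported (modulo $K_h^{*q}$) on primes of $\O_{K_h}$ lying in $S_h\subseteq \Supp(\Delta\O_{K_h})$. Because $p\nmid\Delta$, no prime above $p$ lies in any $S_h$, so after multiplying by a suitable element of $A_{\Q_p}^{*q}$ we may assume that for every $h\in\Ff_{\Q_p}$ the value $\beta(\th_h)$ is a unit in the ring of integers of $K_h\otimes_{\Q}\Q_p$. Moreover, we may use the $\Q_p^*$-action to scale so that $\beta$ itself has integral components and we may pick $v\in\Q_p^*$ with $a_n N_{A/\Q_p}(\beta)=v^q$; since $a_n$ is a $p$-adic unit ($p\nmid a_n$ as $a_n\mid\Delta$) and the norm of $\beta$ is a unit, $v$ is a unit too.

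Next I would take as the integral model for $D_\beta$ the scheme over $\Z_p$ cut out inside $\P_{\Z_p}^{d-1}\times \mathcal{C}$ by the equations \eqref{coverseq}, where $\mathcal{C}$ is the natural model of $C$ over $\Z_p$ given by \eqref{curve}. With our choice of representative all coefficients appearing in these equations are $p$-adic integers, and $\al(\th_h)$, $v$ reduce to nonzero elements of the residue fields. The special fiber is thus cut out by the same equations interpreted over $\F_p$.

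Finally I would re-run the smoothness argument of the previous proposition on the special fiber. The Jacobian matrix \eqref{matrix} reduces entry-by-entry to $\F_p$, and the rank-$d$ conclusion holds for the same three reasons: (a) the reductions $\bar\th_h\in\bar\F_p$ remain pairwise distinct, because $p\nmid\text{Disc}(g)\mid\Delta$, so the first row entries $\al(\th_i)u_i^q-\al(\th_0)$ cannot all vanish simultaneously unless forced by coincidences among the $\bar\th_h$; (b) $p\nmid q$ means the factor $q$ appearing throughout the middle rows is invertible mod $p$, and together with $\gcd(n_h,q)=1$ ensures each $n_h$ is a unit in $\F_p$ as well; (c) $\al(\th_i)$ and $v$ remain nonzero by the choice of representative. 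Together with the $-1$ in the final row these facts reproduce verbatim the argument that the matrix has maximal rank at every point of every affine patch, hence the reduction is nonsingular and $D_\beta$ has good reduction at $p$.

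The main obstacle is purely bookkeeping: verifying that the representative $\beta$ can be chosen to have integral unit components at $p$ and that the corresponding $v$ is a unit, so that the defining equations descend to an honest integral model whose special fiber is accessible to the previous Jacobian computation. Once this setup is in place, the rest is just the prior proof read mod $p$.
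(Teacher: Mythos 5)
Your approach is essentially the same as the paper's: choose a representative of $[\al]$ (hence of $[\beta]$) that is a $\p$-adic unit at every prime above $p$ (possible because $p\nmid\Delta$ keeps $p$ away from the support $\mathbf{S}$), reduce the defining equations \eqref{coverseq} modulo $p$, and invoke the full-rank computation of the matrix \eqref{matrix} on the special fiber. One small slip: $\gcd(n_h,q)=1$ together with $p\nmid q$ does \emph{not} imply $p\nmid n_h$, so $n_h$ need not be a unit in $\F_p$; but this is harmless, since the rank argument only uses the first $d-1$ entries of the top row, the diagonal entries $q\,\al(\th_i)(X-\th_0)u_i^{q-1}$ of the middle rows, and the $-1$ in the last row, none of which involve the $n_h$.
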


\begin{proof}
Since $p$ does not divide $q$ or the discriminant of $g$, $\al\in A_\Q^*$ can be chosen such that $\ord_\p\left(\al(\th_h)\right)=0$ for every $h\in \Ff_{\bar\Q}$ and every $\p\mid p$. Take $\beta$ to be the image of this $\al$ under the inclusion $\ot \Q_p: A_\Q^*\rightarrow A_{\Q_p}^*$. Then $[\beta]=r_p\left([\al]\right)$ and also $\ord_\p\left(\beta(\th_h)\right)=0$ for every $h\in \Ff_{\bar \Q_p}$ and every $\p\mid p$. Thus the defining equation
\[\lambda\beta(\th_h)u_h^q=X-\th_hZ
\]
of $D_\beta$ can be reduced to an equation modulo $\p_h$ for every $h\in\Ff_{\bar\Q_p}$, where $\p_h$ is the prime of $K_h$ above $p$. We thus get an unramified cover $\ol{\phi_\beta}:\ol{D_\beta}\rightarrow \ol{C}$ defined over $\F_p$. Furthermore we know that $\ol{D_\beta}$ is non-singular since the reduction of the matrix of partial derivatives in \eqref{matrix} has full rank.
\end{proof}

\begin{prop}\label{genus}
The genus $G$ of the covers $D_\al$ is equal to $q^{d-2}\left(\frac{d(q-1)}{2}-q\right)+1$.
\end{prop}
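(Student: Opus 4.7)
The plan is to apply Riemann--Hurwitz twice: first to the degree-$q$ projection $x\colon C\to\P^1$ to compute the genus of (the smooth projective model of) $C$, and then to the unramified cover $\phi_\al\colon D_\al\to C$ whose properties have just been established.

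For the first application, let $g_C$ denote the genus of the smooth projective model of $C$. I would analyse the ramification of the $x$-projection as follows. Above each root $\theta$ of $f$ in $\bar\Q$, of multiplicity $n_h$ with $1\le n_h\le q-1$, the relation $y^q=(x-\theta)^{n_h}u(x)$ with $u(\theta)\ne 0$ together with $\gcd(n_h,q)=1$ forces (via a Bezout/valuation argument on the normalisation) the ramification index to be exactly $q$, with a single point lying above $\theta$. Since we are in characteristic zero, the ramification is tame and the different exponent at such a point equals $q-1$. Above $\infty\in\P^1$, the hypothesis $q\mid n$ gives $q$ distinct points (as noted in the remark after the definition of superelliptic curve), so the projection is unramified over $\infty$. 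Summing contributions from the $d$ distinct roots of $f$ in $\bar\Q$ yields a different of degree $d(q-1)$, and Riemann--Hurwitz gives
\[
2g_C-2 \;=\; q(-2)+d(q-1),\qquad\text{so}\qquad g_C=\frac{d(q-1)}{2}-q+1.
\]

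For the second application, the preceding lemma shows $\phi_\al$ is unramified of degree $q^{d-2}$, and the preceding proposition shows $D_\al$ is smooth. Riemann--Hurwitz for an unramified map therefore gives
\[
2G-2 \;=\; q^{d-2}(2g_C-2) \;=\; q^{d-2}\bigl(d(q-1)-2q\bigr),
\]
which rearranges to $G=q^{d-2}\!\left(\tfrac{d(q-1)}{2}-q\right)+1$, as desired.

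The only real subtlety is step one: the affine model $y^q=f(x)$ is allowed to be singular (at $(\theta,0)$ whenever $n_h\ge 2$), so one must compute $g_C$ on the normalisation rather than naively on $C$. The key local input is that $\gcd(n_h,q)=1$ pins down a unique point above each root with ramification index $q$, regardless of whether that point is singular downstairs. Once this is in place, both Riemann--Hurwitz applications are immediate.
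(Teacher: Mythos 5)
Your proof is correct and takes essentially the same approach as the paper: both use the degree-$q$ projection to $\P^1$, ramified with index $q$ over each of the $d$ roots of $f$, together with the fact that $\phi_\al$ is unramified of degree $q^{d-2}$. The only cosmetic difference is that the paper applies Riemann--Hurwitz once to the composite $\psi\circ\phi_\al\colon D_\al\to\P^1$, which sidesteps the normalization subtlety you flag (since $D_\al$ is already known to be nonsingular), whereas you apply Riemann--Hurwitz in two stages via an intermediate computation of $g_C$.
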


\begin{proof}
Let $\psi:C\rightarrow \P^1$ be the map $(X,Y,Z)\mapsto (X,Z)$. This has degree $q$ and is ramified above the $d$ points $(X,Z)$ satisfying $F(X,Z)=0$. Since $\phi_\al$ is unramified of degree $q^{d-2}$, the composition $\psi\circ\phi_\al:D_\al\rightarrow\P^1$ has degree $q^{d-1}$ and is ramified at the same points as $\psi$. Each ramification point has exactly $q^{d-2}$ preimages, each of them with ramification index $q$. The Riemann-Hurwitz formula applied to $\psi\circ\phi_\al$ yields
\begin{align*}
2G-2= & (2\Genus(\P^1)-2)\degree(\psi\circ\phi_\al)+\sum_{P\in \P^1,Q\in(\psi\circ\phi_\al)^{-1}(P)}(e_Q-1)\\
2G-2= & -2q^{d-1}+dq^{d-2}(q-1)\\
G= & q^{d-2}\left(\frac{d(q-1)}{2}-q\right)+1
\end{align*}
\end{proof}

\begin{defi}\label{useful}
Define the set of \textbf{\textit{useful primes}} to be
\[
S_{up}:=\left\{p\text{ rational prime }: p\mid q\Delta\quad\text{or}\quad\sqrt{p}+\frac{1}{\sqrt{p}}\leq 2G\right\}.
\]
\end{defi}

\begin{prop}
Suppose that $p$ is a rational prime and $p\notin S_{up}$. Then $\bar H_\Q(\mathbf{S})\subseteq r_p^{-1}\left(\Image(\mu_{\Q_p})\right)$.
\end{prop}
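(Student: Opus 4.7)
The plan is to translate the conclusion into a statement about existence of $\Q_p$-points on a cover $D_\beta$ and then apply Hasse--Weil plus Hensel.

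Fix $[\alpha]\in \bar H_\Q(\mathbf{S})$ and set $[\beta]=r_p([\alpha])\in A_{\Q_p}^*/\Q_p^*A_{\Q_p}^{*q}$. By Proposition \ref{coversprop}, the containment $[\beta]\in \Image(\mu_{\Q_p})$ is equivalent to $D_\beta(\Q_p)\neq\emptyset$, so the whole statement reduces to showing that every such cover $D_\beta$ has a $\Q_p$-point. First I would invoke Proposition \ref{nonsingfinite}: since $p\notin S_{up}$ forces $p\nmid q\Delta$, after choosing a suitable representative $\beta$ the cover $D_\beta$ has good reduction, so the reduction $\overline{D_\beta}$ is a smooth projective curve over $\F_p$ whose (geometric) genus is the genus $G$ of $D_\beta$ given by Proposition \ref{genus}.

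Next I would apply the Hasse--Weil bound to $\overline{D_\beta}$:
\[
\#\overline{D_\beta}(\F_p)\ \geq\ p+1-2G\sqrt{p}.
\]
The remaining hypothesis defining $S_{up}$ is exactly $\sqrt{p}+\frac{1}{\sqrt{p}}>2G$, which is equivalent to $p+1>2G\sqrt{p}$; hence $\overline{D_\beta}(\F_p)\neq\emptyset$. Since $\overline{D_\beta}$ is smooth over $\F_p$, Hensel's lemma lifts any $\F_p$-point to a point of $D_\beta(\Q_p)$, giving the desired non-emptiness and therefore $r_p([\alpha])\in\Image(\mu_{\Q_p})$. Since $[\alpha]$ was arbitrary in $\bar H_\Q(\mathbf{S})$, the claimed inclusion follows.

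The only subtle point, which I would address before invoking Hasse--Weil, is that the bound as stated requires $\overline{D_\beta}$ to be geometrically irreducible (or at worst a disjoint union of such components each of genus $\le G$). This has to be extracted from the structure of $\phi_\beta:D_\beta\to C$: the cover is unramified of prime-power degree $q^{d-2}$ over an irreducible curve, and its defining equations \eqref{coverseq} show that over $\bar\Q_p$ the fiber over a generic point is a single $\G_{\bar\Q_p}$-orbit, so $D_\beta$ is geometrically irreducible, and good reduction preserves this. Once that technicality is in place, the rest is a routine Hasse--Weil plus Hensel argument, which is precisely the content motivating the definition of $S_{up}$.
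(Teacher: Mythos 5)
Your main argument coincides with the paper's proof step for step: fix $[\alpha]\in\bar H_\Q(\mathbf{S})$, let $[\beta]=r_p([\alpha])$, use Propositions \ref{nonsingfinite} and \ref{genus} to conclude that $\overline{D_\beta}$ is a smooth curve of genus $G$ over $\F_p$, apply the Hasse--Weil bound (noting that $p\notin S_{up}$ is precisely the condition $p+1>2G\sqrt{p}$), lift an $\F_p$-point to $D_\beta(\Q_p)$ by Hensel, and translate back via Proposition \ref{coversprop}. That part is correct and is exactly the paper's reasoning.

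Where you go astray is the closing paragraph on geometric irreducibility. You are right that the Hasse--Weil lower bound in this form requires $\overline{D_\beta}$ to be geometrically irreducible, a hypothesis the paper leaves implicit here and also in the Riemann--Hurwitz computation of Proposition \ref{genus}, which presupposes a connected curve. However, the justification you give does not work. The group $\G_{\bar\Q_p}=\Gal(\overline{\bar\Q_p}/\bar\Q_p)$ is trivial, so a fiber with $q^{d-2}>1$ geometric points cannot be a single $\G_{\bar\Q_p}$-orbit; and a finite \'etale cover of prime-power degree over an irreducible base need not be irreducible (a disjoint union of copies of $C$ is a degree-$q^{d-2}$ counterexample). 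The correct criterion is transitivity of the monodromy action of $\pi_1(C_{\bar\Q_p})$ on a geometric fiber, which is not evident from \eqref{coverseq} alone. Your fallback, ``a disjoint union of components each of genus $\le G$,'' also does not immediately suffice, since the components could be permuted by $\Gal(\bar\F_p/\F_p)$ without any individual component being defined over $\F_p$. Either supply a genuine argument that the $(\Z/q)^{d-2}$-covers of $C$ are geometrically connected, or leave the point at the same implicit level as the paper and avoid claiming it is resolved.
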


\begin{proof}
Let $\al\in A_\Q^*$ such that $[\al]\in\bar H_\Q(\mathbf{S})$ and $\beta\in A_{\Q_p}^*$ such that $[\beta]=r_p\left([\al]\right)$. By Propositions \ref{nonsingfinite} and \ref{genus} we know that $\ol{D_\beta}$ is a non-singular curve of genus $G$ over $\F_p$. Then by the Hasse-Weil inequality we have that $\# \ol{D_{\beta}}(\F_p)>0$ and we can use Hensel's Lemma to lift to a point in $D_{\beta}(\Q_p)$. By Proposition \ref{coversprop} this is equivalent to $[\beta]=r_p([\al])\in \Image(\mu_{\Q_p})$.
\end{proof}

\begin{corr}\la{finite}
\[
\sel=\{\al\in H_\Q : r_p(\al)\in\Image(\mu_{\Q_p})\text{ for all }p\in S_{up}\}
\]
\end{corr}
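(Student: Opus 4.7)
The statement is essentially a repackaging of the proposition immediately preceding it, together with the observation that the set $S_{up}$ is finite. The plan is as follows. First I would unwind the definition of the Selmer set: $\sel$ consists of those classes $[\al]\in\bar H_\Q(\mathbf{S})$ satisfying $r_p([\al])\in\Image(\mu_{\Q_p})$ for \emph{every} rational prime $p$. I would then split this infinite family of local conditions according to whether $p$ lies in $S_{up}$ or not.

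For $p\notin S_{up}$, the immediately preceding proposition gives the inclusion $\bar H_\Q(\mathbf{S})\subseteq r_p^{-1}(\Image(\mu_{\Q_p}))$, so the local condition at any such $p$ is automatically satisfied by every element of $\bar H_\Q(\mathbf{S})$ and can therefore be deleted from the definition. This leaves exactly the family of conditions indexed by $p\in S_{up}$, which is the desired description.

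To make the corollary useful as a computational statement I would also point out that $S_{up}$ is finite: the primes dividing $q\Delta$ are finite in number, and the condition $\sqrt p+1/\sqrt p\leq 2G$ forces $p$ to be bounded (explicitly, $p\leq (2G)^2$ up to lower order terms). Combined with the finiteness of $\bar H_\Q(\mathbf{S})$ established in Section \ref{ss12}, this reduces the computation of $\sel$ to a finite number of local image computations, each performed by the \textsc{LocalImage} algorithm of Section \ref{theimageofmup}.

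There is really no obstacle at this step; the content is entirely in the previous proposition. The only point worth double-checking is the implicit notational convention in the statement, which writes $\al\in H_\Q$ in place of $[\al]\in \bar H_\Q(\mathbf{S})$: I would make sure the final write-up is consistent with the definition of $\sel$ so that no element of the Selmer set is lost because it fails the containment in $\bar H_\Q(\mathbf{S})$ rather than a local condition at a bad prime.
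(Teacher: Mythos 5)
Your proposal is correct and is precisely the (unwritten) argument behind this corollary: the paper states it without proof because it is an immediate consequence of the preceding proposition, exactly as you reduce the infinite family of local conditions in Definition \ref{selmerset} to the finitely many primes in $S_{up}$. Your side remark about the notational slip ($\al\in H_\Q$ versus $[\al]\in\bar H_\Q(\mathbf{S})$) is a fair observation and worth keeping, since the statement as printed conflates the coset with its image under $\pi_\Q$.
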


\subsection{Computational efficiency}

The groups $A(q,\mathit{\mathbf{S}})$, $\Q(q,T)$ and the homomorphism $\iota$ defined in Sections \ref{ss12} and \ref{ss14} can be computed using commands implemented by Claus Fieker in the MAGMA computer algebra system \cite{MR1484478}, so we can compute $\pi_\Q\left(H_\Q(\mathbf{S})\right)=\Bar H_\Q(\mathbf{S})$. The bottleneck of the computation is computing the class and unit groups of the number fields $K_h$, for $h\in\Ff_\Q$, which are needed for the construction of $A(q,\mathbf{S})$.

Also although Corollary \ref{finite} indicates that the algorithm computes $\sel$ in a finite amount of time, the size of $S_{up}$ is prohibitively large and in general we can only hope to get information using small primes. Nevertheless we can still put the algorithm in good use as in most cases bigger primes do not have a contribution in cutting down the set we already have.

\section{Examples}

In this section we give examples of how descent on superelliptic curves can be used to tackle some interesting number theoretical problems. Example \ref{exa2} is a preparatory example to demonstrate how the results from the local computations are obtained and combined to prove statements regarding the sets of rational points of the curve, or curves in question. In Example \ref{exa3} we show how descent can sometimes be the appropriate technique for solving generalized Fermat equations. In Example \ref{exa4} we consider a superelliptic curve, which although covers a plane cubic curve in an obvious way ($X\mapsto X^2,Z\mapsto Z^2$), the set of rational points of this curve is infinite, so it is impossible to construct the set of rational points of the superelliptic curve by pulling back rational points of the cubic curve. After using the algorithm described above we exclude all but one of the covers $D_\al$ due to local insolubility and we manage to compute all the rational points of this remaining cover $D_1$ by other means (found in \cite{MR2011330},\cite{MR2156713}). Finally, we compute $C(\Q)$ since $C(\Q)=\phi_1(D_1(\Q))$. The reasoning of this example, i.e. using covering techniques to transfer the problem to a different type of curves, is also used in the proof of Theorem \ref{theo1}, but unlike Example \ref{exa4}, all the curves involved can be defined over $\Q$. Theorem \ref{theo1} also demonstrates the significance of extending descent arguments to singular superelliptic curves.

\begin{exam}\label{exa2}
Consider the curve defined by the equation\[y^5=2x^5+x^4+2x^3+x^2+3x+3.\]This is ELS, but after applying the algorithm to this curve we obtain the results shown in Table ~\ref{tab:tab1}.\\ \begin{table}[h]\centering\begin{tabular}{| c |@{\ } c@{\ } |@{\ } c @{\ }|@{\ } c@{\ } | @{\ }c@{\ } c@{\ } c@{\ } |@{\ } c@{\ } c@{\ } c@{\ } | @{\ }c @{\ }|}\hline $p$ & 1 & 2 & 3 & 5 & \ldots & 17 & 19 & \ldots & 37 & 41\\ \hline $\displaystyle\# \bar H_\Q(\mathbf{S})\bigcap_{{l\primes} \atop{l\leq p}}r_l^{-1}(\mu_{\Q_l}(C(\Q_l)))$ & 25 & 25 & 25 & 2 & \ldots & 2 & 1 & \ldots & 1 & 0\\ \hline\end{tabular}\caption{Descent computation for $C:y^5=2x^5+x^4+2x^3+x^2+3x+3$.}\label{tab:tab1}\end{table}\\ \noindent Therefore\[\bar H_\Q(\mathbf{S})\cap r_5^{-1}\left(\mu_{\Q_5}(C(\Q_5))\right)\cap r_{19}^{-1}\left(\mu_{\Q_{19}}(C(\Q_{19}))\right)\cap r_{41}^{-1}\left(\mu_{\Q_{41}}(C(\Q_{41}))\right)=\emptyset\]which proves that $C(\Q)=\emptyset$.
\end{exam}

\begin{exam}\la{exa3}
In \cite{MR1915212}, Halberstadt and Kraus, consider the following four generalized Fermat equations
\begin{align}
16a^7+87b^7+625c^7=0\label{HK1}\\
11a^5+29b^5+81c^5=0 \label{HK2}\\
27a^5+16b^5+2209c^5=0\label{HK3}\\
32a^7+81b^7+187c^7=0\label{HK4}
\end{align}
These have solutions everywhere locally, but appear to have no rational points. The authors explain how the modular approach fails to show that the set of rational points is empty. We show how one can use descent to tackle all four of them.

Observe that the problem can be easily transferred to the one of finding rational points on superelliptic curves.
\begin{align*}
\eqref{HK1} \Leftrightarrow ( -b,2a,-c)\in C_1(\Q)\text{, where }C_1: Y^7=8(87X^7+625Z^7)\\
\eqref{HK2} \Leftrightarrow ( -a,3c,-b)\in C_2(\Q)\text{, where }C_2: Y^5=3(11X^5+29Z^5)\\
\eqref{HK3} \Leftrightarrow ( -a,2b,-c)\in C_3(\Q)\text{, where }C_3: Y^5=2(27X^5+2209Z^5)\\
\eqref{HK4} \Leftrightarrow ( -b,2a,-c)\in C_4(\Q)\text{, where }C_4: Y^7=4(81X^7+187Z^7)
\end{align*}  
Table ~\ref{tab:tab2} contains the results obtained when we perform descent on these curves. 
\begin{table}[h]\centering\begin{tabular}{| c | @{\ }c@{\ } | @{\ }c@{\ } | @{\ }c@{\ } | @{\ }c@{\ } | @{\ }c@{\ } c@{\ } c@{\ } | @{\ }c@{\ } |}\hline $p$ & 1 & 2 & 3 & 5 & 7 & \ldots & 23 & 29 \\ \hline $\displaystyle\# \bar H_\Q(\mathbf{S})\bigcap_{{l\primes} \atop{l\leq p}}r_l^{-1}(\mu_{\Q_l}(C_1(\Q_l)))$ & 49 & 0 &  &  &  &  &  &  \\ \hline $\displaystyle\# \bar H_\Q(\mathbf{S})\bigcap_{{l\primes} \atop{l\leq p}}r_l^{-1}(\mu_{\Q_l}(C_2(\Q_l)))$ & 0 &  &  &  &  &  &  &  \\ \hline $\displaystyle\# \bar H_\Q(\mathbf{S})\bigcap_{{l\primes} \atop{l\leq p}}r_l^{-1}(\mu_{\Q_l}(C_3(\Q_l)))$ & 5 & 5 & 5 & 1 & 1 & \ldots & 1 & 0 \\ \hline $\displaystyle\# \bar H_\Q(\mathbf{S})\bigcap_{{l\primes} \atop{l\leq p}}r_l^{-1}(\mu_{\Q_l}(C_4(\Q_l)))$ & 7 & 0 &  &  &  &  &  &  \\ \hline\end{tabular}\caption{Results for the generalized Fermat curves}\label{tab:tab2}\end{table}

\noindent Therefore $C_i(\Q)=\emptyset$ for $i=1,2,3,4$.

\end{exam} 

\begin{exam}\la{exa4}
Consider the curve $C$ in $\P^2(1,2,1)$, defined by 
\[(X,Y,Z)\in C\Leftrightarrow Y^3=(X^2-3Z^2)(X^4-2Z^4).\]This curve has three points at infinity $(1,1,0)$, $(1,\rho,0)$ and $(1,\rho^2,0)$, where $\rho$ is a primitive cube root of unity. So we know that it has at least one rational point. After applying the algorithm to this curve we obtain the results shown in Table ~\ref{tab:tab3}.
\begin{table}[h]\centering\begin{tabular}{| c |@{\ } c @{\ }|@{\ } c @{\ }| @{\ }c@{\ } |@{\ } c@{\ } | @{\ }c@{\ } | @{\ }c @{\ }|@{\ } c@{\ } |@{\ } c@{\ } c@{\ } | }\hline $p$ & 1 & 2 & 3 & 5 & 7 & 11 & 13 & 17 & \ldots\\ \hline $\displaystyle\# \bar H_\Q(\mathbf{S})\bigcap_{{l\primes} \atop{l\leq p}}r_l^{-1}(\mu_{\Q_l}(C(\Q_l)))$ & 243 & 243 & 9 & 3 & 3 & 3 & 3 & 1 & \ldots\\ \hline\end{tabular}\caption{Results for $C:y^3=(x^2-3)(x^4-2)$.}\label{tab:tab3}\end{table}

The element of $\bar H_\Q(\mathbf{S})$ remaining is the image of the point $(1,1,0)$ under $\mu_\Q$ which is equal to the identity element $1\Q^*A^{*3}$. This corresponds to a cover $\phi_1:D_1\rightarrow C$, where $D_1$ is a curve in $\P^5\times C$ defined as in \eqref{coverseq}, whose set of rational points is non-empty. We fix embeddings of $K_{(x^2-3)}$ and $K_{(x^4-2)}$ in $\bar\Q$ and index the six elements of $\Th_{\bar\Q}$ as $\vartheta_1=\sqrt{3},\vartheta_2=-\sqrt{3},\vartheta_3=\sqrt[4]{2},\vartheta_4=-\sqrt[4]{2},\vartheta_5=i\sqrt[4]{2}$ and $\vartheta_6=-i\sqrt[4]{2}$. $D_1$ is defined by the following relations
\begin{align*}
\left((u_1,\ldots, u_6),(X,Y,Z)\right)\in D_1  \Leftrightarrow & \exists\lambda\neq 0\text{ such that }\quad\lambda u_j^3=X-\vartheta_j Z\\ &\text{ for }1\leq j\leq 6\text{ and}\\
 & \lambda^2\prod_{j=1}^6u_j=Y
\end{align*}
This covers a curve $E'$ of genus $1$ in $\P^2$, defined over the number field $L:=\Q(\vartheta_3)$, given by the equation \[(U,V,W)\in E'\Leftrightarrow\quad V^3=(U-\vartheta_3W)(U^2+\vartheta_3^2W^2).\]We have the following commutative diagram\[\begin{CD}D_1 @>{\phi}>> C\\ @V{\kappa}VV @VVV \\ E' @>>{\tau}> \P^1\end{CD}\qquad\begin{diagram}\left((u_1,\ldots,u_6),(X,Y,Z)\right) &\rMapsto & (X,Y,Z) \\ \dMapsto & & \dMapsto \\ (Xu_3^2,(X-\vartheta_3Z)u_5u_6,Zu_3^2) & \rMapsto & (X,Z)\end{diagram}\]Note that $\kappa$ is a well defined rational map between two non-singular curves so it is actually a morphism. We can put $E'$ into Weierstrass form via a linear transformation, by moving the point $(\vartheta_3,0,1)$ to $\infty$. We obtain the Weierstrass model\[E:y^2z-8\vartheta_3^2yz^2=x^3-64z^3.\]The isomorphism of the two models is given by\[\psi:E\rightarrow E',\qquad \psi(x,y,z)=(\vartheta_3y,2\vartheta_3^3x,y-8\vartheta_3^2z).\]Using the package MAGMA we find that the Mordell-Weil rank of $E(L)$ is $1$. Since\[\tau\left(\kappa\left(\phi^{-1}(C(\Q))\right)\right)\subseteq\P^1(\Q),\] we are not interested in all the $L$-rational points of $E'$, only $(U,V,W)\in E'(L)$ such that $U/W\in \Q$. Determining these will give us $C(\Q)$. To solve this problem we can use ``Elliptic curve Chabauty'' \cite{MR2011330}. Fortunately this is implemented in MAGMA. Using the inbuilt MAGMA commands we find that\[\{(U,V,W)\in E'(L):U/W\in\Q\}=\{(1,1,0),(0,-\vartheta_3,1)\}.\]We deduce that $C(\Q)=\{(1,1,0)\}$.
\end{exam}

\begin{exam}
In \cite{MR2098395}, \cite{MR598878}, \cite{MR2332596} and  \cite{MR0078395} the authors consider a generalization of Lucas \lq\lq Square Pyramid\rq\rq\ problem, namely the determination of all pairs $(a,b)\in\Z_{>0}^2$ that satisfy the equation
\[
b^q=1^k+2^k+\ldots +a^k
\]
for some $q\geq 2$ and $k\geq 1$. Theorem \ref{theo1} below is already proved in \cite{MR2098395} where the authors determine all the solutions for all values of $q$ and for $1\leq k\leq 11$. It is suggested in \cite{bennett1} that this could have been extended to larger values of $k$. Nevertheless, we present here this special case since our proof, which involves descent, includes the determination of the full set of rational points (as opposed to the subset of integral points) of a singular superelliptic curve  of genus $7$. We prove the case with $q=3$ because current tools only allow the use of cubic curves for the intermediate steps (although recent developments, like the implementation of descent on $J_C$ in MAGMA, suggest that Chabauty's method for superelliptic curves should be available in the near future, which would open the possibility to solve cases with $q>3$). We consider the case with $k=9$ since this is a value of $k$ where the superelliptic curve involved in the computation is both singular and non-hyperelliptic.

\begin{theo}\label{theo1}
The only pair $(a,b)\in\Z_{>0}^2$ satisfying
\[
b^3=\sum_{i=1}^ai^9
\]
is $(1,1)$.
\end{theo}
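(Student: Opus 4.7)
The plan is to translate the Diophantine equation into a question about $\Q$-rational points on a singular superelliptic curve, then apply the descent machinery developed in this paper. Faulhaber's formula gives
\[
\sum_{i=1}^a i^9 \;=\; \frac{a^2(a+1)^2 P(a)}{20},\qquad P(x)=2x^6+6x^5+x^4-8x^3+x^2+6x-3,
\]
so any $(a,b)\in\Z_{>0}^2$ with $b^3=\sum_{i=1}^a i^9$ yields, via the substitution $Y=10b$, a rational point $(X,Y)=(a,10b)$ on
\[
C:\ Y^3 = 50\,X^2(X+1)^2 P(X).
\]
The right-hand polynomial is cube-free --- the repeated factors $X$ and $X+1$ appear with multiplicity $2<q=3$ --- so $C$ is precisely a singular superelliptic curve of the type handled by the framework of Section \ref{se1}. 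A Riemann--Hurwitz computation shows its smooth projective model has genus $7$, and the known solution $(a,b)=(1,1)$ corresponds to the point $(1,10)\in C(\Q)$.

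The next step is to run the algorithm of Section \ref{imagemup} on $C$. Factor $X(X+1)P(X)$ over $\Q$ to read off $\Ff_\Q$ and the semisimple algebra $A_\Q$, and then compute the finite group $\bar H_\Q(\mathbf{S})$ following Sections \ref{ss12}--\ref{ss14}. For each useful prime $p\in S_{up}$ determine $\mu_{\Q_p}(C(\Q_p))$ using \textsc{LocalImage}, with the pre-computation from \textsc{SizeOfNeighborhood} and \textsc{ComputeInputList} applied around the singular points lying above $X=0$ and $X=-1$. Intersecting the preimages $r_p^{-1}(\mu_{\Q_p}(C(\Q_p)))$ with $\bar H_\Q(\mathbf{S})$ over $p\in S_{up}$ yields $\sel$. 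By Corollary \ref{cor1}, $C(\Q)=\bigcup_{[\al]\in\sel}\phi_\al(D_\al(\Q))$. For each surviving class $[\al]$ I would, following the strategy of Example \ref{exa4}, project $D_\al$ onto a lower-genus auxiliary curve --- as the paragraph preceding the theorem indicates, these intermediate curves can all be taken over $\Q$ --- and apply classical or elliptic Chabauty to determine its rational points. Finally, each point in $C(\Q)$ is translated back to a candidate $(a,b)=(X,Y/10)$; after discarding the points at infinity, the singular points $(0,0)$ and $(-1,0)$, and any point with $a\notin\Z_{>0}$, only $(1,1)$ should remain.

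The main obstacle lies in this last step. The covers $D_\al$ have very large genus (the formula of Proposition \ref{genus} gives $3^{d-2}(d-3)+1$, which with $d=\deg g\leq 8$ is up to $3646$), so a direct Chabauty computation on $D_\al$ is out of the question. Success therefore rests on two further ingredients: first, that the Selmer set $\sel$ is small --- ideally a single class corresponding to $(1,10)$ --- so that at most a handful of covers survive; and second, that each surviving $D_\al$ admits a natural map onto an elliptic (or otherwise tractable) curve over $\Q$ whose Mordell--Weil rank is strictly less than its genus, so that Chabauty--Coleman is applicable. Because $C$ is singular, neither the hyperelliptic descent routine of \cite{MR2521292} nor a naive extension to smooth superelliptic curves is adequate, which is precisely why the singular-curve framework developed in this paper is required.
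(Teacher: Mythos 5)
Your overall strategy matches the paper's proof closely: translate via Faulhaber's formula, set up a singular superelliptic curve, compute $\sel$ with the algorithm of Section~\ref{imagemup}, and handle the surviving covers by mapping them to auxiliary curves over $\Q$ and determining those curves' rational points. But there is a concrete gap: you never apply Proposition~\ref{changeofvars}. Your model $Y^3=50\,X^2(X+1)^2P(X)$ has $\deg f=10$ and $3\nmid 10$, whereas the entire machinery of Section~\ref{se1} --- the definition of $\delta_\k$, the group $H_\k$, the covers $D_\alpha$, and the statement ``we will assume that $a_n\neq 0$ and that $q\mid n$ throughout the rest of the paper'' --- presupposes $q\mid n$. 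The paper therefore first performs the birational change (here $x\mapsto 10/(x-1)$, suitably scaled) to obtain the degree-$12$ model $Y^3=X^2(X+5Z)^2(X+10Z)^2(X^2+30XZ+100Z^2)(X^4+\cdots)$. This is not merely cosmetic: the substitution moves the branch point at infinity (a singularity your list ``$(0,0)$ and $(-1,0)$'' omits) to the new linear factor $X$, so that $d=9$ rather than $8$ and the cover genus is $G=3^7\cdot 6+1=13123$ rather than $3646$.

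More importantly, the extra linear factor is precisely what makes your final step work. To map $D_\alpha$ onto a cubic curve over $\Q$ one needs (at least) three factors of $g$ whose product is defined over $\Q$; with the degree-$12$ model one has the three $\Q$-rational linear factors $X$, $X+5Z$, $X+10Z$, and the paper defines $\kappa_i:D_i\to E_i$ by exactly this triple. Your degree-$10$ model has only two $\Q$-rational linear factors, $X$ and $X+1$, so the ``take a $\Q$-defined subcover'' step as you have outlined it does not quite go through. Also note that, once the right model and covers are in place, the paper never invokes Chabauty--Coleman: all five cubics $E_i$ turn out to have Mordell--Weil rank $0$, so $E_i(\Q)$ is a finite torsion group computed directly, and one simply pulls back and checks $\kappa_i$-fibers. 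Your appeal to Chabauty is a plausible fallback but is not what the paper actually needs.
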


\begin{proof}
After replacing the right hand side of the equation by the closed formula for the sum of the first $a$ ninth powers we get the equation 
\begin{equation}\label{bern}
b^3=\frac{1}{10}a^2(a + 1)^2(a^2 + a - 1)\left(a^4 + 2a^3 - \frac{1}{2}a^2 - \frac{3}{2}a + \frac{3}{2}\right)
\end{equation}
After the change of variables in the proof of Proposition \ref{changeofvars}, we see that the solutions $(a,b)$ correspond to rational points on the following genus $7$, singular, superelliptic curve $C$ in $\P^2(1,4,1)$
\[
C:\   Y^3=X^2(X+5Z)^2(X + 10Z)^2(X^2 + 30XZ + 100Z^2)(X^4 + 30X^3Z + 460X^2Z^2 + 2400XZ^3 + 4000Z^4)
\] 
\begin{equation}\label{map}
(a,b)\mapsto (10,10000b,(a-1))\in C(\Q)
\end{equation}
We have the descent map $\mu_\Q:C(\Q)\rightarrow A_\Q^*/\Q^*A_\Q^{*3}$ where the algebra $A_\Q$ is isomorphic to the product $\Q\times\Q\times\Q\times K_1\times K_2$ where $K_1=\Q[t]/(t^2 + 30t + 100)$ and $K_2=\Q[t]/(t^4 + 30t^3 + 460t^2 + 2400t + 4000)$. Denote the images of $t$ in $K_1$ and $K_2$ by $\th_1$ and $\th_2$ respectively. We can compute representatives in $A_\Q^*$ for the images of the five known rational points. These are shown in Table ~\ref{tab:tabknownpoints}.
\begin{table}[h]\centering\scalebox{1}{\begin{tabular}{|c|c|c|}
\hline 
$i$ & $P_i\in C(\Q)$ & $\mu_\Q(P_i)$ \\
\hline 
$1$ & $(1,1,0)$ & $\left[(1,1,1,1,1)\right]$ \\ 
\hline 
$2$ & $(0,0,1)$ & $\left[(1, 5, 10, \th_1, \th_2)\right]$ \\ 
\hline 
$3$ & $(-5,0,1)$ & $\left[(1, 75, 1, 1, 400(5\th_2^3 + 134\th_2^2 + 1860\th_2 - 5840))\right]$ \\ 
\hline 
$4$ & $(-10,0,1)$ & $\left[(2, 1, 300, 25(3\th_1 + 10), 20(\th_2^3 + 20\th_2^2 - 90\th_2 - 700))\right]$ \\ 
\hline 
$5$ & $(-10,1000,3)$ & $\left[(2, 1, 4, 25(3\th_1 + 10), 20(13\th_2^3 + 330\th_2^2 + 4380\th_2 + 9600))\right]$ \\  
\hline 
\end{tabular}}\caption{Representatives in $A_\Q^*$ of images of known points.}\label{tab:tabknownpoints}\end{table}

By Proposition \ref{genus}, the genus $G$ of the covers $D_\al$ is 
\[
G=q^{d-2}\left(\frac{d(q-1)}{2}-q\right)+1=3^7\left(\frac{9\times 2}{2}-3\right)+1=13123
\]
so the set $S_{up}$ (see Definition \ref{useful}) contains the rational primes which are less than $4\times 13123^2=688852516$. Performing the local computations for all of these primes would be unfeasible, but we do not need to, since after checking the primes $2$, $3$ and $5$ we exclude every element from $H_\Q(\mathbf{S})$ apart from the five we already know. Thus
\[
\sel=\mu_\Q\left(\left\{P_1,P_2,P_3,P_4,P_5\right\}\right).
\]
These elements correspond to five covers $\phi_i:D_i\rightarrow C$, where each $D_i$ is a curve in $\P^8\times C$, defined using a representative in $\mu_\Q(P_i)$ (see \eqref{coverseq} in Section \ref{imagemup}).

Now for each $1\leq i\leq 5$, we choose a combination of three factors from $\Ff_{\bar\Q}$ to form a cover $\kappa_i:D_i\rightarrow E_i$ where $E_i$ is a genus one curve. By choosing factors whose product is defined over $\Q$, we ensure that $E_i$ and $\kappa_i$ are defined over $\Q$. When choosing the three factors we also aim to obtain an elliptic curve $E_i$ which has finitely many rational points. As it turns out, for all $1\leq i\leq 5$, the subset $\{x, x+5,x+10\}\subseteq\Ff_{\bar\Q}$ satisfies the criteria we need, giving the five curves in Table ~\ref{tab:tab4}.
\begin{table}[h]\centering\begin{tabular}{| c | c | c | }
\hline 
$i$ & $(U,V,W)\in E_i\Leftrightarrow$ & $\kappa_i\left((u_h)_{h\in\Ff_{\bar\Q}},(X,Y,Z)\right)=$\\ 
\hline
$1$ & $V^3=U(U+5W)(U+10W)$ & $\left(Xu_{(x)}^2,Xu_{(x+5)}u_{(x+10)},Zu_{(x)}^2\right)$ \\ 
\hline
$2$ & $50V^3=U(U+5W)(U+10W)$ & $\left(Xu_{(x)}^2,Xu_{(x+5)}u_{(x+10)},Zu_{(x)}^2\right)$ \\ 
\hline
$3$ & $75V^3=U(U+5W)(U+10W)$ & $\left(Xu_{(x)}^2,Xu_{(x+5)}u_{(x+10)},Zu_{(x)}^2\right)$ \\
\hline
$4$ & $600V^3=U(U+5W)(U+10W)$ & $\left(Xu_{(x)}^2,Xu_{(x+5)}u_{(x+10)},Zu_{(x)}^2\right)$ \\
\hline
$5$ & $8V^3=U(U+5W)(U+10W)$ & $\left(Xu_{(x)}^2,Xu_{(x+5)}u_{(x+10)},Zu_{(x)}^2\right)$ \\
\hline
\end{tabular}\caption{The cubic curves $E_i$ and the maps $\kappa_i:D_i\rightarrow E_i$.}\label{tab:tab4}\end{table}\\

We note that there are isomorphisms $E_1\cong E_5$ and $E_3\cong E_4$ over $\Q$, but this is not relevant to the computation. We leave the coefficients of $V^3$ in $E_4$ and $E_5$ unchanged to remind the reader that they originate from multiplying the first three entries of the representatives of $\mu_\Q(P_i)$ shown in Table ~\ref{tab:tabknownpoints}. All five cubic curves are elliptic curves with Mordell-Weil rank equal to $0$ so we can determine the sets $E_i(\Q)$.
\begin{align*}
&E_1(\Q)=\left\{\ba{c} (0 , 0 , 1), (-10 , -10 , 3), (-20 , 10 , 3),\\ (-10 , 0 , 1), \fbox{(1 , 1 , 0)}, (-5 , 0 , 1) \ea\right\}\\
&E_2(\Q)=\left\{\fbox{(0 , 0 , 1)}, (-10 , 0 , 1), (-5 , 0 , 1)\right\}\\
&E_3(\Q)=\left\{(0 , 0 , 1), (-10 , 0 , 1), \fbox{(-5 , 0 , 1)}\right\}\\
&E_4(\Q)=\left\{(0 , 0 , 1), \fbox{(-10 , 0 , 1)}, (-5 , 0 , 1)\right\}\\
&E_5(\Q)=\left\{\ba{c} (0 , 0 , 1), (-10 , 0 , 1), (-5 , 0 , 1),\\ (-20 , 5 , 3), (2 , 1 , 0), \fbox{(-10 , -5 , 3)} \ea\right\}
\end{align*}
We then compute the pre-images of these sets under the maps $\kappa_i$. The box indicates that a point has one $\Q$-rational point in its $\kappa_i$-fiber. The rest of the points have no rational pre-image. As expected, $D_i(\Q)$ contains exactly one element for each $1\leq i\leq 5$. Using Corollary \ref{cor1} we get 
\[
C(\Q)=\bigcup_{i=1}^5\phi_i\left(D_i(\Q)\right)=\left\{P_1,P_2,P_3,P_4,P_5\right\}.
\]
These points correspond to all solutions $(a,b)\in \Q^2$ satisfying Equation \eqref{bern}. In particular, using the inverse of the map \eqref{map} we get that $P_1,P_2,P_3,P_4$ and $P_5$ correspond to $(1,1)$, ``$\infty$'', $(-1,0)$, $(0,0)$ and $(-2,1)$ respectively. Therefore $(1,1)$ is the only solution where both $a$ and $b$ are positive integers. 
\end{proof}
\end{exam}
\section*{Acknowledgements}
The author would like to thank Samir Siksek for his valuable comments and guidance, Dino Lorenzini for pointing out that descent would be applicable to the equations in Example \ref{exa3} and Brendan Creutz along with the rest of the Computational Algebra Group at the University of Sydney, for their excellent hospitality and the opportunity to include the algorithms described in this paper in MAGMA \cite{MR1484478}.

\bibliographystyle{amsplain}
\bibliography{superelliptic}

\end{document}